\newtheorem{theorem}{Theorem}[section]
\newtheorem{definition}[theorem]{Definition}
\newtheorem{lemma}[theorem]{Lemma}
\newtheorem{proposition}[theorem]{Proposition}
\newtheorem{problem}[theorem]{Problem}
\newtheorem{observation}[theorem]{Observation}
\DeclareMathOperator{\BoxProduct}{\mathbin{\Box}}
\DeclareMathOperator{\diam}{diam}
\DeclareMathOperator{\rad}{rad}
\DeclareMathOperator{\e}{ecc}
\newcommand{\svSpan}[1]{\sigma^{\boxtimes}_V(#1)}
\newcommand{\seSpan}[1]{\sigma^{\boxtimes}_E(#1)}
\newcommand{\dvSpan}[1]{\sigma^{\times}_V(#1)}
\newcommand{\deSpan}[1]{\sigma^{\times}_E(#1)}
\newcommand{\cvSpan}[1]{\sigma^{\BoxProduct}_V(#1)}
\newcommand{\ceSpan}[1]{\sigma^{\BoxProduct}_E(#1)}
\author[Iztok Bani\v{c} and Andrej Taranenko]{Iztok Bani\v{c}\affiliationmark{1,2,3}
  \and Andrej Taranenko\affiliationmark{1,2}}
\title[Span of a graph: keeping the safety distance]{Span of a graph: keeping the safety distance}
\affiliation{
  University of Maribor, Faculty of Natural Sciences and Mathematics, Slovenia\\
  Institute of Mathematics, Physics and Mechanics, Slovenia\\
  University of Primorska, Andrej Maru\v si\v c Institute, Slovenia}
\keywords{strong span of a graph, direct span of a graph, Cartesian span of a graph, safety distance}
\begin{document}
\publicationdata{vol. 25:1}{2023}{8}{10.46298/dmtcs.9859}{2022-07-29; 2022-07-29; 2022-12-14}{2023-02-19}
\maketitle

\begin{abstract}
Inspired by Lelek's idea from [Disjoint mappings and the span of spaces, Fund. Math. 55 (1964), 199 -- 214], we introduce the novel notion of the span of graphs. Using this, we solve the problem of determining the \emph{maximal safety distance} two players can keep at all times while traversing a graph. Moreover, their moves must be made with respect to certain move rules. For this purpose, we introduce different variants of a span of a given connected graph. All the variants model the maximum safety distance kept by two players in a graph traversal, where the players may only move with accordance to a specific set of rules, and their goal: visit either all vertices, or all edges. For each variant, we show that the solution can be obtained by considering only connected subgraphs of a graph product and the projections to the factors. We characterise graphs in which it is impossible to keep a positive safety distance at all moments in time. Finally, we present a polynomial time algorithm that determines the chosen span variant of a given graph.
\end{abstract}

\section{Introduction}
In the times of the global pandemic which we have witnessed starting in 2020, two of the basic public safety measures that were introduced worldwide were social distancing and keeping a safety distance in public spaces. In this paper, we solve the problem of computing a maximal possible safety distance two people can keep at all times. 

Our concept is based on Lelek's span of a continuum which is introduced in \cite{Lelek}. This notion became extremely popular in continuum theory and many papers appeared, for an example see \cite{Hoehn}, where more references can be found. In \cite{Hoehn}, Hoehn proved that there are continua with zero span that are not chainable, which solved one of the most famous open problems in continuum theory. In the present paper, we show that this is not the case in the graph theoretical equivalent of the span, i.e., among other things we show that paths are the only graphs with zero span.

Imagine two players, say Alice and Bob, moving through a graph. They would both like to visit all vertices and/or all edges of the graph whilst keeping the maximum possible safety distance from each other. One way to describe the players' movements is to represent their positions at a fixed moment $t$ in time by a pair $(a_t, b_t)$, where both $a_t$ and $b_t$ are vertices of the graph. After that either one or both players can choose to move to an adjacent vertex or stay at the current one. 

Figure \ref{fig:motivation} shows an example of the graph $G$, and the location (the walk) of both players at five consecutive moments in time, shown by the graph $W$. For each moment in time Alice's location and Bob's location are represented by the red and the blue arrow, respectively. At the time $t_0$, Alice is at the vertex $r_1$ and Bob is at the vertex $r_3$, so their locations can be represented by the pair $(r_1, r_3)$. At this specific time, the players are keeping safe at the distance 2. Next, at the time $t_1$, Alice moves to the vertex $r_2$, whilst Bob stays at the vertex $r_3$ (this can be represented by the pair $(r_2, r_3)$, and the players are at the distance 1). At the time $t_2$, Alice moves to the vertex $r_3$ and Bob moves to the vertex $r_4$; thus obtaining the pair $(r_3, r_4)$ and maintaining the safety distance 1.
Similarly, at the time $t_3$, Alice stays at the vertex $r_3$ and Bob moves to the vertex $r_2$; thus obtaining the pair $(r_3, r_2)$ and maintaining the safety distance 1. Finally, at the time $t_4$, Alice moves to the vertex $r_4$, whilst Bob moves to the vertex $r_1$ (this can be represented by the pair $(r_4, r_1)$, and the players are at the distance 2). So in this example the walk of both players at five consecutive points in time can be represented by the tuple $((r_1,r_3), (r_2,r_3), (r_3, r_4), (r_3, r_2), (r_4, r_1))$. Note, in this example all the vertices are visited by both players and the maximal distance they were able to maintain at all times was one.

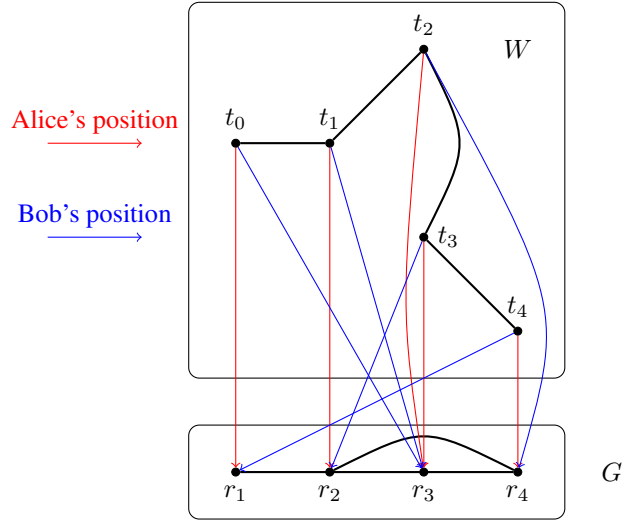
\begin{figure}[!ht]
	\centering
\begin{tikzpicture}[scale=1.25]
\tikzstyle{rn}=[circle, fill=black,draw, inner sep=0pt, minimum size=3pt]
\tikzstyle{loc}=[circle, fill=none,draw, inner sep=0pt, minimum size=5pt, red]
\tikzstyle{locb}=[circle, fill=none,draw, inner sep=0pt, minimum size=7pt, blue]

\node [style=rn] (r1) at (1, -0.5) [label=below:{$r_1$}] {};
\node [style=rn] (r2) at (2, -0.5) [label=below:{$r_2$}] {};
\node [style=rn] (r3) at (3, -0.5) [label=below:{$r_3$}] {};
\node [style=rn] (r4) at (4, -0.5) [label=below:{$r_4$}] {};
\draw [thick] (r2) .. controls (3, 0) .. (r4);
\draw [thick] (r1)--(r4);


\node [style=rn] (t0) at (1, 3) [label=above:{$t_0$}] {};
\node [style=rn] (t1) at (2, 3) [label=above:{$t_1$}] {};
\node [style=rn] (t2) at (3, 4) [label=above:{$t_2$}] {};
\node [style=rn] (t3) at (3, 2) [label=right:{$t_3$}] {};
\node [style=rn] (t4) at (4, 1) [label=above:{$t_4$}] {};

\draw [thick] (t0)--(t1)--(t2).. controls (3.5 , 3) .. (t3)--(t4);

\draw [rounded corners](0.5, 0.5) rectangle (4.5, 4.5);
\node at (4, 4) {$W$};

\draw [rounded corners](0.5, 0) rectangle (4.5, -1);
\node at (5, -0.5) {$G$};

\draw[red,->] (t0)--(r1);
\draw[red,->] (t1)--(r2);
\draw[red,->] (t2)..controls (2.75, 1.25) ..(r3);
\draw[red,->] (t3)--(r3);
\draw[red,->] (t4)--(r4);

\draw[blue,->] (t0)--(r3);
\draw[blue,->] (t1)--(r3);
\draw[blue,->] (t2)..controls (4.5, 1.25) ..(r4);
\draw[blue,->] (t3)--(r2);
\draw[blue,->] (t4)--(r1);

\draw[red,->] (-1,3)--node[midway, above] {Alice's position} (0,3);
\draw[blue,->] (-1,2)--node[midway, above] {Bob's position}(0,2);

\end{tikzpicture}
\caption{An example of two visitors' walks in a gallery.}
		\label{fig:motivation}
\end{figure}

In general, we are interested in keeping the maximal possible safety distance between both players amongst all possible walks through the given graph $G$. For a given connected graph $W$ that represents a walk (consecutive points in time are presented by adjacent vertices) the two mappings from $V(W)$ to $V(G)$ which show the location of the corresponding player must map adjacent vertices to either the same vertex (meaning the player did not move) or to adjacent vertices (meaning the player moved to an adjacent vertex). Such mappings are called \emph{weak homomorphisms}. 

Moreover, we assume that both players desire to visit all vertices and/or all edges of the graph. In what follows, we present a formal notion for the described situations for three different sets of movement rules at any observed point in time:
\begin{description}
    \item[traditional movement rules:] both players are independently allowed to move to an adjacent vertex or stay at their current location, 
    \item[active movement rules:] both players must move to an adjacent vertex, or
    \item[lazy movement rules:] exactly one player is allowed to move to an adjacent vertex.
\end{description}

Our problem of keeping a safety distance between two players is reminiscent of the classic cop and robber game on graphs, see \cite{CopsAndRobbers} for the book on the topic and for more references. In such a game, a cop (or several) and a robber are put on a graph. At each point in time all parties involved can move according to the movement rules. The goal is that the cop captures the robber in a finite number of steps. In our somewhat dual concept to this one, two players desire to maintain the maximal possible safety distance at all times. Also, in similar types of dynamic graph parameters the two players usually follow opposite goals (e.g. the cop wants to catch the robber, the robber does not want to be caught). In our problem, both players have the same goal of keeping the maximum possible safety distance.

We proceed as follows. In Section \ref{sec:preliminary} basic definitions and notations are presented. In Section \ref{dva} we define different vertex and edge span variants of a graph and prove that each span can be obtained from a corresponding graph product. We continue with Section \ref{tri}, where we characterise 0-span graphs for each variant. Moreover, we present an infinite family of graphs for which the vertex and edge variants of the corresponding span are equal. Finally, we show that for a given connected graph $H$ any span variant of $H$ can be computed in polynomial time. We conclude the paper with several open problems.

\section{Preliminary results}\label{sec:preliminary}
Our terminology and notation mostly follow \cite{West} for basic concepts of graph theory and \cite{HIK2011knjiga} for concepts related to product graphs. For any undefined terminology we refer the reader to the mentioned references, however for completeness of this paper some concepts are defined here.

Let $G$ be a connected graph and $v$ be a vertex of G. The \emph{eccentricity} of the vertex $v$, denoted $\e(v)$, is the maximum distance from $v$ to any vertex of $G$. That is, $\e(v)=\max\{d_G(v,u)\ |\ u \in V(G)\}.$ The \emph{radius} of $G$, denoted $\rad(G)$, is the minimum eccentricity among the vertices of $G$. Therefore, $\rad(G)=\min\{\e(v)\ |\ v \in V(G)\}.$ The \emph{diameter} of $G$, denoted $\diam(G)$, is the maximum eccentricity among the vertices of $G$, thus, $\diam(G)=\max\{\e(v)\ |\ v \in V(G)\}.$ 

For graphs $G$ and $H$ we will use the notation $G\subseteq H$ to denote that $G$ is a \emph{subgraph of $H$}. Moreover, $G\subseteq_C H$ denotes that $G$ is a connected subgraph of $H$. For a set $U$ of vertices of a graph $G$ we denote 
by $\left\langle U \right\rangle _G$ the \emph{subgraph of $G$ induced by the set $U$}. 

Let $H$ be a graph and let $G\subseteq H$. The graph $H-G$ is defined by $V(H-G)=V(H)\setminus V(G)$ and $E(H-G)=E(H)\setminus\{uv\in E(H) \ | \ u\in V(G) \}.$

Let $H$ be a graph. A graph $K\subseteq H$ is a \emph{component} of $H$, if $K$ is connected and for any connected graph $G\subseteq H$ it holds that if
$V(G)\cap V(K)\neq \emptyset$, then $G\subseteq K.$

In this paper, we deal with weak homomorphisms which are a generalisation of homomorphisms. For a good overview of results on homomorphisms we refer the reader to \cite{HellNese}. 

Let $G$ and $H$ be any graphs. A function $f: V(G) \rightarrow V(H)$ is a \emph{weak homomorphism from $G$ to $H$} if  for all $u,v\in V(G)$, $uv\in E(G)$ implies $f(u) f(v) \in E(H)$ or $f(u)=f(v)$. We will use the more common notation $f:G\rightarrow H$ to say that $f: V(G) \rightarrow V(H)$ is a weak homomorphism. A weak homomorphism $f: G \rightarrow H$ is \emph{surjective} if $f(V(G)) = V(H)$. A weak homomorphism $f: G \rightarrow H$ is \emph{edge surjective} if it is surjective and for every $uv \in E(H)$ there exists an edge $xy \in E(G)$ such that $u=f(x)$ and $v=f(y)$. For a weak homomorphism $f: G \rightarrow H$, the \emph{image $f(G)$ of the graph $G$} is the graph defined by $V(f(G)) = \{ f(u) \ |\ u \in V(G)\}$ and $E(f(G)) = \{ f(u)f(v) \ |\ uv\in E(G) \text{ and } f(u) \neq f(v)\}.$

Let $f: G \rightarrow H$ be a weak homomorphism from $G$ to $H$ and let $K\subseteq G$. The restriction 
$f|_{K}:K\rightarrow H$ is defined by 
$f|_{K}(u)=f(u)$ for any $u\in V(K)$.

Note that if $f: G \rightarrow H$ is a weak homomorphism, then $f(G)\subseteq H$.
The following lemma is a well-known result. Since the proof is short, we give it anyway. 
\begin{lemma}\label{lem:connectedWH}
Let $f: G \rightarrow H$ be a weak homomorphism. If $G$ is a connected graph, then also $f(G)$ is a connected graph.
\end{lemma}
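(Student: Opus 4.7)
The plan is to show that any two vertices of $f(G)$ can be joined by a walk in $f(G)$, using a path between their preimages in $G$ and pushing it forward through $f$.

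First I would fix arbitrary $x, y \in V(f(G))$ and choose preimages $u, v \in V(G)$ with $f(u) = x$ and $f(v) = y$. Since $G$ is connected, there exists a $u$-$v$ path $u = u_0, u_1, \ldots, u_k = v$ in $G$, so each $u_{i-1}u_i$ is an edge of $G$. Apply the weak homomorphism property to each such edge: either $f(u_{i-1}) = f(u_i)$, or $f(u_{i-1})f(u_i) \in E(H)$, and in the latter case this edge belongs to $E(f(G))$ by the definition of the image graph.

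Then I would extract from the sequence $f(u_0), f(u_1), \ldots, f(u_k)$ a walk in $f(G)$ from $x$ to $y$ by deleting consecutive repetitions. What remains is a sequence of vertices of $f(G)$ whose consecutive members are distinct and, by the previous observation, adjacent in $f(G)$. This yields a walk (in fact a path after further reduction, but a walk suffices) in $f(G)$ joining $x$ to $y$, establishing connectivity.

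No real obstacle is expected here; the only point that needs care is distinguishing the two cases $f(u_{i-1}) = f(u_i)$ versus $f(u_{i-1}) \ne f(u_i)$, because edges of $f(G)$ were defined only for the latter case, and the argument must ensure that the compressed sequence consists precisely of the latter transitions.
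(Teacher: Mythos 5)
Your proof is correct and follows essentially the same route as the paper: the paper simply asserts the inequality $d_{f(G)}(f(u),f(v))\leq d_G(u,v)$, which is justified by exactly the push-forward-and-compress argument you spell out explicitly. No issues.
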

\begin{proof}
Note that since $f: G \rightarrow H$ is a weak homomorphism, it follows that
\begin{displaymath}
d_{f(G)}(f(u),f(v))\leq d_G(u,v)
\end{displaymath}
for any $u,v\in V(G)$. Therefore, there is a path in $f(G)$ from $f(u)$ to $f(v)$ for any $u,v\in V(G)$. 
\end{proof}

Let $G$ and $H$ be any graphs. In the present paper, we deal with the following three products of $G$ and $H$, see \cite{HIK2011knjiga}.
The \emph{Cartesian product $G \BoxProduct H$} is defined by $V(G\BoxProduct H)=V(G)\times V(H)$ and $E(G\BoxProduct H)=\{(u_1,v_1)(u_2,v_2)  \ |\ u_1=u_2 \text{ and } v_1v_2\in E(H) \text{ or } u_1u_2\in E(G) \text{ and } v_1=v_2\}.$ 
The \emph{direct product $G\times H$} is defined by $V(G\times H)=V(G)\times V(H)$ and $E(G\times H)= \{(u_1,v_1)(u_2,v_2)  \ | \ u_1u_2\in E(G) \text{ and } v_1v_2\in E(H)\}.$ The \emph{strong product $G\boxtimes H$} is defined by $V(G\boxtimes H)=V(G)\times V(H)$ and $E(G\boxtimes H)=E(G\BoxProduct H) \cup E(G\times H).$

Let $G$ and $H$ be any graphs. The functions $
p_1: V(G)\times V(H) \rightarrow V(G) \text{ and } p_2: V(G)\times V(H) \rightarrow V(H),
$
defined by $p_1(u,v)=u$ and $p_2(u,v)=v$ for each $(u,v)\in V(G)\times V(H)$ are called the \emph{first} and the \emph{second} \emph{projection functions}, respectively. We also refer to them as the projection functions or simply, the projections.

\begin{observation}\cite{HIK2011knjiga}
Let $G$ and $H$ be any graphs. In each of the product graphs $G\times H$, $G\BoxProduct H$ and $G\boxtimes H$ both projections are weak homomorphisms.
\end{observation}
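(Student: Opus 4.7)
The plan is to verify the weak-homomorphism condition
\[
uv \in E(G') \;\implies\; f(u)f(v)\in E(H') \;\text{ or }\; f(u)=f(v)
\]
directly from the definitions of the three products, by case analysis on an arbitrary product edge. Nothing structural is required; the only subtlety is to remember that the weak-homomorphism definition permits an edge to be ``collapsed'' to a single vertex, which is exactly what absorbs the Cartesian-type edges.

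First I would treat $G\BoxProduct H$. Given an arbitrary edge $(u_1,v_1)(u_2,v_2)\in E(G\BoxProduct H)$, the definition forces one of two cases: either $u_1=u_2$ and $v_1v_2\in E(H)$, or $u_1u_2\in E(G)$ and $v_1=v_2$. In the first case $p_1(u_1,v_1)=u_1=u_2=p_1(u_2,v_2)$, which satisfies the weak-homomorphism condition via equality, while $p_2(u_1,v_1)p_2(u_2,v_2)=v_1v_2\in E(H)$. The second case is symmetric, showing both $p_1$ and $p_2$ are weak homomorphisms on $G\BoxProduct H$.

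Next, for $G\times H$ the definition of an edge $(u_1,v_1)(u_2,v_2)\in E(G\times H)$ simultaneously requires $u_1u_2\in E(G)$ and $v_1v_2\in E(H)$, so $p_1(u_1,v_1)p_1(u_2,v_2)=u_1u_2\in E(G)$ and likewise $p_2(u_1,v_1)p_2(u_2,v_2)=v_1v_2\in E(H)$; here the collapsing alternative is never needed. Finally, since $E(G\boxtimes H)=E(G\BoxProduct H)\cup E(G\times H)$, any edge of the strong product is of one of the two types already handled, so $p_1$ and $p_2$ remain weak homomorphisms on $G\boxtimes H$.

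The main obstacle is essentially nonexistent — the argument is a direct unwinding of definitions — so the only writing decision is how explicitly to display the case split. I would favour a compact presentation that highlights the role of the equality clause in the weak-homomorphism definition, since that is the single conceptual point worth flagging to the reader.
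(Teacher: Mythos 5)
Your proof is correct; the paper itself states this observation without proof (citing the reference \cite{HIK2011knjiga}), and your direct case analysis on the edge types of each product, using the equality clause of the weak-homomorphism definition to absorb the Cartesian-type edges, is exactly the standard argument one would supply. Nothing is missing.
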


At the end of the section, we define a distance between two homomorphisms, which will be used in Section \ref{dva} to introduce all the variants of the spans of graphs.
\begin{definition}\label{def:m}
Let $f,g: G \rightarrow H$ be weak homomorphisms. We define 
\[ 
    m_G(f,g) = \min \{ d_H(f(u), g(u)) \ |\ u \in V(G) \}
\] 
to be the \emph{distance from $f$ to $g$}. 
\end{definition}
\begin{observation}\label{obs:diameter}
Let $f,g: G \rightarrow H$ be weak homomorphisms. Note that 
\begin{displaymath}
m_G(f,g)\leq \diam(H),
\end{displaymath}
if $G$ is connected. If $G$ is not connected, then $m_G(f,g)$ may equal $\infty$.
\end{observation}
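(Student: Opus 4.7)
The plan is to handle the two assertions of the observation separately, each being essentially a short unpacking of the definitions. For the inequality $m_G(f,g)\leq \diam(H)$, I would pick any vertex $u \in V(G)$, which exists since a connected graph is nonempty. Because $f(u), g(u) \in V(H)$, the definition of diameter gives $d_H(f(u),g(u)) \leq \diam(H)$. Taking the minimum over all of $V(G)$ can only decrease this quantity, so $m_G(f,g) \leq \diam(H)$, as required. (If one allows $H$ to be disconnected one reads $\diam(H)=\infty$ and the bound is vacuous; the interesting case is $H$ connected, when the bound is genuinely finite.)

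For the second assertion it suffices to exhibit one disconnected $G$ together with weak homomorphisms $f,g$ realising $m_G(f,g)=\infty$. I would take $G$ to consist of two isolated vertices $u_1,u_2$ and $H$ to consist of two isolated vertices $a,b$ (so $E(G)=E(H)=\emptyset$). Because $G$ has no edges, every function $V(G)\to V(H)$ is vacuously a weak homomorphism, so I may set $f(u_1)=f(u_2)=a$ and $g(u_1)=g(u_2)=b$. Since $a$ and $b$ lie in different components of $H$, $d_H(f(u_i),g(u_i))=d_H(a,b)=\infty$ for $i=1,2$, and hence $m_G(f,g)=\infty$.

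There is essentially no obstacle to overcome here: the observation is a sanity check on the definition of $m_G$. The only small subtlety is remembering that a weak homomorphism need not map each component of $G$ into a single component of $H$ when $G$ is disconnected, which is precisely what is exploited in the example.
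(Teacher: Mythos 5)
Your proof is correct; the paper states this as an unproved observation, and your unpacking of the definitions (bounding $d_H(f(u),g(u))$ by $\diam(H)$ for connected $H$, and the two-isolated-vertices example witnessing $m_G(f,g)=\infty$) is exactly the intended argument. One tiny quibble with your closing remark: what the example exploits is not that a weak homomorphism fails to map each component of $G$ into a single component of $H$ (each of your $f$ and $g$ does send all of $G$ into one component), but that $f(u)$ and $g(u)$ may land in \emph{different} components of $H$, making $d_H(f(u),g(u))$ infinite.
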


\begin{lemma}\label{mLessRad}
If $f,g: G \rightarrow H$ are surjective weak homomorphisms and $G$ is connected, then
\begin{displaymath}
m_G(f,g)\leq \rad(H).
\end{displaymath}
\end{lemma}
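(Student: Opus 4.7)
The plan is to use only the surjectivity of $f$ (the hypothesis on $g$ and on $G$ will be superfluous for this particular argument, though they make the statement sit naturally alongside the earlier Observation). The definitions of $m_G(f,g)$ and $\rad(H)$ are both infima, so to bound $m_G(f,g)$ from above by $\rad(H)$ it suffices to exhibit a single witness vertex $u \in V(G)$ at which $d_H(f(u), g(u)) \leq \rad(H)$.

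To produce such a $u$, I would first pick a \emph{center} of $H$, i.e.\ a vertex $c \in V(H)$ with $\e(c) = \rad(H)$; such a $c$ exists because $H$ is (implicitly) a connected graph on which $\rad(H)$ is defined. Now invoke surjectivity of $f$: there is some $u \in V(G)$ with $f(u) = c$. For this $u$, since $g(u) \in V(H)$, we have
\[
d_H(f(u), g(u)) \;=\; d_H(c, g(u)) \;\leq\; \max_{w \in V(H)} d_H(c, w) \;=\; \e(c) \;=\; \rad(H).
\]
Taking the minimum over all vertices of $G$ then yields $m_G(f,g) \leq \rad(H)$, which is exactly the desired inequality.

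I do not anticipate any real obstacle here: the proof is a one-line consequence of the definition of the radius combined with the fact that a surjective map must hit a center vertex. The only thing to be careful about is that the argument secretly uses connectedness of $H$ (so that $d_H$ is a genuine metric and $\rad(H) < \infty$), which is implicit in the very appearance of $\rad(H)$ in the statement. The connectedness of $G$ and the surjectivity of $g$, though listed as hypotheses, play no role in the argument; they are presumably kept for parallelism with the earlier Observation \ref{obs:diameter} and with the intended applications of the lemma in Section \ref{dva}.
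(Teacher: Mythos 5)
Your proof is correct and is essentially identical to the paper's: both pick a vertex of $H$ whose eccentricity equals $\rad(H)$, use surjectivity of $f$ to find a preimage, and bound the distance at that preimage by the eccentricity. Your added remark that the surjectivity of $g$ and the connectedness of $G$ are not needed for this argument is accurate.
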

\begin{proof}
Let $G$ be a connected graph and $f,g: G \rightarrow H$ be surjective weak homomorphisms. Let $u \in V(H)$ be such that $\e(u) = \rad(H)$, i.e. $u$ is a vertex of $H$ with eccentricity equal to the radius of $H$. Since $f$ is surjective, there is a vertex $v\in V(G)$ such that $f(v) = u$. Therefore $d_H(f(v), g(v)) \leq \rad(H)$. This implies that $m_G(f,g) \leq \rad(H)$.
\end{proof}

\begin{definition}\label{def:epsilon}
Let $H$ be a connected graph and let $Z$ be a graph such that $V(Z)\subseteq V(H) \times V(H)$. We define 
\begin{displaymath}
\varepsilon_{H}(Z) = \min\{d_H(x,y)\ |\ (x,y) \in V(Z)\}.
\end{displaymath}
\end{definition}

\section{Span - definitions and basic properties}\label{dva}
Here we introduce six different variants of a span of a given connected graph: the strong edge span, the strong vertex span, the direct edge span, the direct vertex span, the Cartesian edge span, and the Cartesian vertex span. 

All the variants model the maximum safety distance kept by two players moving through a graph, where the players may only move with accordance to a specific set of rules. Moreover, for each set of rules we define the vertex and the edge span variant. In the vertex variant of a span, all vertices of the graph must be visited at least once by both players, and in the edge variant of a span all edges must be traversed by both at least once (and therefore also all vertices).

\subsection{Strong span}
For the strong span variant the players may move with accordance to the traditional movement rules. These rules can be described using weak homomorphisms from some path to a connected graph on which the game is played, since paths can be seen as time parameters of walks.

We now define the strong edge and the strong vertex span of a graph.

\begin{definition}\label{def:strongSpans}
Let $H$ be a connected graph. 
Define 
\begin{align*}
  \seSpan{H} = \max \{ m_P(f,g) \ |\ &f,g: P \rightarrow H \text{ are edge surjective weak homomorphisms and $P$ is a path} \}.  
\end{align*}

We call $\seSpan{H}$ the \emph{strong edge span} of the graph $H$.

Define 
\begin{align*}
     \svSpan{H} = \max \{ m_P(f,g) \ |\ &f,g: P \rightarrow H \text{ are surjective weak homomorphisms and $P$ is a path} \}. 
\end{align*}
 
We call $\svSpan{H}$ the \emph{strong vertex span} of the graph $H$.
\end{definition}

\begin{observation}\label{obs:1}
Note that for any connected graph $H$, the sets
\begin{align*}
\{ m_P(f,g) \ |\ &f,g: P \rightarrow H \text{ are edge surjective weak homomorphisms and $P$ is a path} \}
\end{align*}
and
\begin{align*}
   \{ m_P(f,g) \ |\ &f,g: P \rightarrow H \text{ are surjective weak homomorphisms and $P$ is a path} \}
   \end{align*}
   are non-empty subsets of non-negative integers and are bounded from above by $\rad(H)$. Therefore,  $\svSpan{H}$ and $\seSpan{H}$ are well-defined.
   Note also that
\begin{displaymath}
\seSpan{H}\leq \svSpan{H}\leq \rad(H).
\end{displaymath}
\end{observation}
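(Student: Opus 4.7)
The plan is to establish four claims in sequence: non-emptiness of both sets, that their elements are non-negative integers, the upper bound by $\rad(H)$, and finally the chain $\seSpan{H}\leq \svSpan{H}\leq \rad(H)$. First I would exhibit an explicit element of each set by constructing, for the given connected graph $H$, a path $P$ together with an edge-surjective weak homomorphism $f:P\to H$. Since $H$ is connected, enumerate its edges $e_1,\ldots,e_k$ and build a walk in $H$ that, for each $i$, traverses $e_i$, interleaving a shortest path in $H$ between the endpoint of $e_{i-1}$ and an endpoint of $e_i$. Reading this walk as a sequence of vertices defines a weak homomorphism from a path $P$ of appropriate length onto $H$ which is edge-surjective (and hence also vertex-surjective). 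Taking $g=f$ yields $m_P(f,g)=0$, so $0$ lies in both sets.

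Integrality is automatic from Definition \ref{def:m}: $m_P(f,g)$ is the minimum of the non-negative integers $d_H(f(u),g(u))$. For the upper bound I would apply Lemma \ref{mLessRad} to the connected path $P$: every edge-surjective weak homomorphism is, by definition, surjective, so in either case the hypotheses of the lemma are satisfied and $m_P(f,g)\leq \rad(H)$. Consequently each of the two sets is a non-empty subset of the finite set $\{0,1,\ldots,\rad(H)\}$, so its maximum is attained, which is exactly what well-definedness of $\seSpan{H}$ and $\svSpan{H}$ requires.

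For the final chain, note that since every edge-surjective weak homomorphism is in particular surjective, the set appearing in the definition of $\seSpan{H}$ is contained in the set appearing in the definition of $\svSpan{H}$, and therefore $\seSpan{H}\leq \svSpan{H}$; the inequality $\svSpan{H}\leq \rad(H)$ is immediate from the upper bound already established. The only step that is not pure unwinding of definitions or a direct appeal to Lemma \ref{mLessRad} is the construction of the edge-surjective weak homomorphism from a path, and this is where I expect the main (and still very mild) obstacle to lie; everything else is routine.
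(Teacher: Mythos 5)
Your argument is correct and follows exactly the justification the paper leaves implicit for this observation: non-emptiness via a walk through all edges of $H$ read as an edge-surjective weak homomorphism from a path (with $g=f$ giving the value $0$), the bound $m_P(f,g)\leq\rad(H)$ from Lemma \ref{mLessRad} since edge-surjective implies surjective, and the inequality $\seSpan{H}\leq\svSpan{H}$ from containment of the defining sets. Nothing further is needed.
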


Note that in the above definition, paths may be replaced by any connected graph, as seen in the following proposition. This proves that the concept of spans of a graph is an application of the notion of spans of continua from \cite{Lelek} where spans are defined by going through all continua, i.e., compact connected metric spaces $G$, and not just through all arcs (paths $P$). In the theory of continua these two are not equivalent, since there are continua that are not path connected.

\begin{proposition}\label{prop:strongSpans}
Let $H$ be a connected graph. 
Then
\begin{align*}
     \svSpan{H} = \max \{ m_G(f,g) \ |\ &f,g: G \rightarrow H \text{ are surjective weak homomorphisms} \\ &\text{and $G$ is connected} \}
\end{align*}
and
\begin{align*}
  \seSpan{H} = \max \{ m_G(f,g) \ |\ &f,g: G \rightarrow H \text{ are edge surjective weak homomorphisms} \\ &\text{and $G$ is connected}  \}.
\end{align*}
\end{proposition}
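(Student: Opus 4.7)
The plan is to prove each of the two equalities by showing both inequalities. The inequality $\svSpan{H} \le$ RHS (and the analogous edge version) is immediate from Definition \ref{def:strongSpans} because every path is a connected graph, so every value $m_P(f,g)$ appearing in the definition of $\svSpan{H}$ already appears in the set on the right-hand side. All the work is in the reverse inequality, where I must show that replacing the path $P$ by an arbitrary connected $G$ does not enlarge the supremum.

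For the reverse inequality, my idea is to exploit the fact that a connected graph always admits a \emph{spanning walk}. Given a connected graph $G$ and surjective weak homomorphisms $f,g : G \to H$, I take a walk $u_0, u_1, \ldots, u_n$ in $G$ that visits every vertex of $G$ at least once (for the vertex variant) or traverses every edge of $G$ at least once (for the edge variant). Such a walk exists in any connected graph: for the vertex variant, use a closed DFS walk around a spanning tree of $G$; for the edge variant, use a closed walk in the multigraph obtained from $G$ by doubling each edge (which is Eulerian). Encode this walk as a path $P$ on vertices $v_0, v_1, \ldots, v_n$ together with the map $w : V(P) \to V(G)$ defined by $w(v_i) = u_i$; since consecutive $u_i, u_{i+1}$ are equal or adjacent in $G$, $w : P \to G$ is a weak homomorphism, and by choice of the walk it is surjective (respectively edge surjective).

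Now I consider the compositions $f \circ w, g \circ w : P \to H$. A routine check shows that a composition of weak homomorphisms is a weak homomorphism, that a composition of two surjections is surjective, and that a composition of two edge surjections is edge surjective. Hence $f \circ w$ and $g \circ w$ are surjective (respectively edge surjective) weak homomorphisms from the path $P$ to $H$, so they are candidates in the defining set of $\svSpan{H}$ (respectively $\seSpan{H}$). Finally, since $w$ is vertex surjective,
\[
    m_P(f\circ w,\, g\circ w) \;=\; \min_{v\in V(P)} d_H\bigl(f(w(v)),\, g(w(v))\bigr) \;=\; \min_{u\in V(G)} d_H(f(u), g(u)) \;=\; m_G(f,g),
\]
because the set $\{w(v) : v \in V(P)\}$ equals $V(G)$. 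This witnesses $m_G(f,g) \le \svSpan{H}$, and taking the maximum over all $G, f, g$ gives the desired inequality; the same equation handles the edge case since edge surjectivity implies vertex surjectivity of $w$.

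I don't expect any serious obstacle: the bulk of the argument is the observation that a connected graph has a spanning walk, and the rest is a functorial verification that weak homomorphisms, surjectivity, and edge surjectivity are all preserved under composition. The only thing to be slightly careful about is the existence of the maximum (rather than merely a supremum) on the right-hand side; this follows because the set of candidate values is a nonempty subset of $\{0, 1, \ldots, \rad(H)\}$ by Lemma \ref{mLessRad} (and of $\{0, 1, \ldots, \diam(H)\}$ for the edge version by Observation \ref{obs:diameter} combined with edge surjectivity), hence is finite and attains its maximum.
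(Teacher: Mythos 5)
Your proof is correct and follows essentially the same route as the paper's: both reduce the connected-graph case to the path case by unrolling a spanning walk of $G$ (through all vertices, resp.\ all edges) into a path $P$ and composing the resulting weak homomorphism $P \to G$ with $f$ and $g$, noting that $m_P(f\circ w, g\circ w) = m_G(f,g)$ by vertex surjectivity of $w$. Your additional remarks on the existence of the spanning walk and on the maximum being attained are fine but not a substantive departure.
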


\begin{proof}
    Let \begin{align*}
     A = \{ m_P(f,g) \ |\ &f,g: P \rightarrow H \text{ are surjective weak homomorphisms and $P$ is a path} \} 
\end{align*}
and 
\begin{align*}
     B = \{ m_G(f,g) \ |\ &f,g: G \rightarrow H \text{ are surjective weak homomorphisms and $G$ is connected} \}. 
\end{align*}

Since $A \subseteq B$, it follows that $\max A \leq \max B$. To show that $\max B \leq \max A$, let $G$ be any connected graph and $f,g: G \rightarrow H$ be any surjective weak homomorphisms. We show that there is a path $P$ and surjective weak homomorphisms $f', g': P \rightarrow H$ such that $m_G(f,g) = m_P(f',g')$. Let $W=(w_0, w_1, \ldots, w_k)$, where $w_i w_{i+1} \in E(G)$ for each $i \in \{0, 1, \ldots, k-1\}$, be any walk through all vertices of $G$, let $P$ be a path with the vertex set $\{p_0, p_1, \ldots, p_k\}$, where $p_i p_{i+1} \in E(P)$ for each $i \in \{0, 1, \ldots, k-1\}$, and let $h: P \rightarrow G$ be defined by $h(p_i)=w_i$ for each $i \in \{0, 1, \ldots, k\}$. Note that $f\circ h$ and $g\circ h$ are surjective weak homomorphisms from $P$ to $H$ such that $m_P(f\circ h, g\circ h) = m_G(f, g)$. This proves that \begin{align*}
  \svSpan{H} = \max \{ m_G(f,g) \ |\ &f,g: G \rightarrow H \text{ are surjective weak homomorphisms} \\ &\text{and $G$ is connected} \}
\end{align*}

The proof of \begin{align*}
  \seSpan{H} = \max \{ m_G(f,g) \ |\ &f,g: G \rightarrow H \text{ are edge surjective weak homomorphisms} \\ &\text{and $G$ is connected} \}
\end{align*} is analogous to the proof above, with additional assumption that the walk $W$ is a walk through all edges of the graph.
\end{proof}

For any connected graph $H$, the following two theorems show that it suffices to consider only connected subgraphs $Z$ of $H\boxtimes H$ and the projections $p_1, p_2: H\boxtimes H \rightarrow H$ when determining the corresponding span of a graph. In particular, $Z$ can be viewed as the subgraph of $H\boxtimes H$ induced by all vertices $(u, v)$ such that at some point during the game on $H$, Alice occupies $u$ while Bob occupies $v$. Thus, $p_1(Z)$ (resp. $p_2(Z)$) represents the subgraph of $H$ traversed by Alice (resp. Bob) throughout the duration of the game, while $\varepsilon_{H}(Z)$ represents the minimum distance between Alice and Bob. 

\begin{theorem}\label{thm:s-vertexSpanOnStrongProduct}
If $H$ is a connected graph, then 
\begin{equation*}
    \svSpan{H} = \max \{ \varepsilon_{H}(Z) \ |\ Z \subseteq_C H\boxtimes H \text{ with } p_1(V(Z))=p_2(V(Z))=V(H) \}.
\end{equation*}
\end{theorem}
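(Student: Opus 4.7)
The plan is to prove both inequalities of the claimed equality, using Proposition \ref{prop:strongSpans} to pass freely between paths and arbitrary connected domain graphs. Throughout, I will exploit the structural observation that the strong product $H \boxtimes H$ is precisely the right product to encode the traditional movement rules: its edges allow each coordinate to independently stay put or move to an adjacent vertex, matching exactly the condition for two weak homomorphisms into $H$ to be combined into a single weak homomorphism into the product.

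For the inequality $\svSpan{H} \le \max\{\varepsilon_H(Z) : \ldots\}$, I would start from the characterization in Proposition \ref{prop:strongSpans}: pick a connected graph $G$ and surjective weak homomorphisms $f,g\colon G \to H$ realizing $\svSpan{H} = m_G(f,g)$. Define $\varphi\colon V(G) \to V(H) \times V(H)$ by $\varphi(u) = (f(u), g(u))$. For any edge $uv \in E(G)$, each coordinate of $\varphi(u)\varphi(v)$ is either equal or adjacent in $H$, which is exactly the condition for $\varphi(u)\varphi(v) \in E(H \boxtimes H)$ (unless $\varphi(u) = \varphi(v)$). Hence $\varphi$ is a weak homomorphism into $H \boxtimes H$. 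Take $Z := \varphi(G)$. By Lemma \ref{lem:connectedWH}, $Z$ is a connected subgraph of $H \boxtimes H$, and since $p_1 \circ \varphi = f$ and $p_2 \circ \varphi = g$ are surjective onto $V(H)$, we get $p_1(V(Z)) = p_2(V(Z)) = V(H)$. A direct computation then gives
\begin{displaymath}
\varepsilon_H(Z) = \min\{d_H(x,y) : (x,y) \in V(Z)\} = \min\{d_H(f(u), g(u)) : u \in V(G)\} = m_G(f,g),
\end{displaymath}
so $Z$ witnesses $\varepsilon_H(Z) \ge \svSpan{H}$.

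For the reverse inequality, given any connected $Z \subseteq_C H \boxtimes H$ with $p_1(V(Z)) = p_2(V(Z)) = V(H)$, I would take the restrictions $f := p_1|_Z$ and $g := p_2|_Z$. Since the projections $p_1,p_2\colon H \boxtimes H \to H$ are weak homomorphisms (by the observation cited from \cite{HIK2011knjiga}) and restrictions of weak homomorphisms to subgraphs remain weak homomorphisms, both $f$ and $g$ are weak homomorphisms from $Z$ to $H$. The assumption on the projections makes them surjective, and $Z$ is connected by hypothesis. Evaluating $m_Z(f,g)$ for a vertex $u = (x,y) \in V(Z)$ gives $d_H(f(u),g(u)) = d_H(x,y)$, so $m_Z(f,g) = \varepsilon_H(Z)$, and by Proposition \ref{prop:strongSpans} this quantity is at most $\svSpan{H}$.

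The only step that requires genuine care is the verification that $\varphi$ lands in $H \boxtimes H$; every other step is essentially bookkeeping. This care amounts to recognizing that the edge set of the strong product $E(H \BoxProduct H) \cup E(H \times H)$ captures exactly the three allowed transitions under the traditional rules (only Alice moves, only Bob moves, or both move simultaneously), while vertex equality in one coordinate handles the case where a player stays put and the other is also forced to stay (yielding no edge in the image). Once this is in place, the bijective correspondence between connected domains with a pair of surjective weak homomorphisms and connected subgraphs of $H \boxtimes H$ projecting onto $V(H)$ is immediate, and the equality of the two maxima follows.
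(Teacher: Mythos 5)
Your proposal is correct and follows essentially the same route as the paper's proof: one direction combines $f$ and $g$ into $\psi(u)=(f(u),g(u))$, checks this is a weak homomorphism into $H\boxtimes H$ (the paper does this via an explicit four-case analysis, you compress it into the coordinatewise ``equal or adjacent'' observation, which is exactly the strong-product edge condition), and takes $Z$ to be the image; the other direction restricts the projections to $Z$ and notes they are surjective weak homomorphisms with $m_Z(p_1|_Z,p_2|_Z)=\varepsilon_H(Z)$. The only cosmetic difference is that the paper phrases the argument as a set equality $A=B$ rather than two inequalities between maxima.
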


\begin{proof}
We define
\begin{align*}
    A &= \{ \varepsilon_{H}(Z) \ |\ Z \subseteq_C H\boxtimes H \text{ with } p_1(V(Z))=p_2(V(Z))=V(H) \} \text{ and } \\
    B &= \{ m_G(f,g) \ |\ f,g: G \rightarrow H \text{ are surjective weak homomorphisms and $G$ is connected} \}.
\end{align*}
We will show that $\max(A) = \max (B)$ by proving that $A=B$. First we show that $A\subseteq B$. Let $r \in A$ be arbitrary. Let $Z$ be a connected subgraph of $H \boxtimes H$ such that $p_1(V(Z))=p_2(V(Z))=V(H)$ and $\varepsilon_{H}(Z) = r.$  Let $G=Z, f=p_1|_{G}$ and $g=p_2|_{G}$. Then 
\begin{enumerate}
    \item $G$ is a connected graph, 
    \item $f, g: G \rightarrow H$ are surjective weak homomorphisms, and
    \item \begin{align*}
    m_G(f,g) &= \min \{d_H(f(u), g(u))\ |\ u \in V(G) \} \\
           &= \min \{d_H(p_1(u), p_2(u))\ |\ u \in V(Z) \} \\
           &= \min \{d_H(x,y) \ |\ (x,y)\in V(Z) \} \\
           &= \varepsilon_{H}(Z) = r.
\end{align*}
\end{enumerate}
Therefore, $r \in B$ and we have proved that $A \subseteq B$.  

To show that $B\subseteq A$, let $r\in B$ be arbitrary. Let $G$ be a connected graph and let $f,g: G \rightarrow H $ be surjective weak homomorphisms such that $m_G(f,g)=r$. Define $\psi:V(G)\rightarrow V(H\boxtimes H)$ by $\psi(u) = (f(u), g(u))$ for all $u\in V(G)$. We claim that $\psi$ is a well-defined weak homomorphism. It is obvious that $\psi(u)\in V(H\boxtimes H)$ for any $u\in V(G)$. Let $uv\in E(G)$. The following cases are possible:
\begin{enumerate}
    \item $f(u)=f(v)$ and $g(u)=g(v)$. Here $\psi(u)=\psi(v)$.
    \item $f(u)f(v)\in E(H)$ and $g(u)=g(v)$. Here $\psi(u)=(f(u), g(u))=(f(u), g(v))$ and $\psi(v)=(f(v), g(v))$. Therefore $\psi(u)\psi(v)\in E(H\boxtimes H)$. 
    \item $f(u)=f(v)$ and $g(u)g(v)\in E(H)$. Here $\psi(u)=(f(u), g(u))=(f(v), g(u))$ and $\psi(v)=(f(v), g(v))$. Therefore $\psi(u)\psi(v)\in E(H\boxtimes H)$. 
    \item $f(u)f(v)\in E(H)$ and $g(u)g(v)\in E(H)$. Here $\psi(u)=(f(u), g(u))$ and $\psi(v)=(f(v), g(v))$. Therefore $\psi(u)\psi(v)\in E(H\boxtimes H)$. 
\end{enumerate}
It follows that $\psi$ is a well-defined weak homomorphism from $G$ to $H\boxtimes H$. 
Let $Z=\psi (G)$. Since $G$ is connected, it follows that $Z$ is a connected subgraph of $H\boxtimes H$. Next we show that $p_1(V(Z))=V(H)$ and $p_2(V(Z))=V(H)$. Let $x\in V(H)$. Since $f$ and $g$ are surjective weak homomorphisms, there are $u,v\in V(G)$ such that $f(u)=x$ and $g(v)=x$. Then $p_1(f(u),g(u))=x$ and $p_2(f(v),g(v))=x$ and we are done. 

Since
\begin{align*}
    \varepsilon_{H}(Z) &= \min\{ d_H(x,y)\ |\ (x,y) \in V(Z) \} \\
                   &= \min\{d_H(f(u), g(u)) \ |\ u \in V(G) \} \\
                   &= m_G(f,g) = r,
\end{align*}
it follows that $r \in A$. Hence, $B \subseteq A$ and we have proved that $A=B$. It follows that $\max(A) = \max(B)$. Using Proposition \ref{prop:strongSpans} the assertion follows immediately.
\end{proof}

\begin{theorem}\label{thm:s-edgeSpanOnStrongProduct}
Let $H$ be a connected graph. Then 
\begin{equation*}
    \seSpan{H} = \max \{ \varepsilon_{H}(Z) \ |\ Z \subseteq_C H\boxtimes H \text{ with } p_1(Z)=p_2(Z)=H \}.
\end{equation*}
\end{theorem}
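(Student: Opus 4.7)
The plan is to mirror the structure of the proof of Theorem~\ref{thm:s-vertexSpanOnStrongProduct}, replacing ``surjective'' with ``edge surjective'' everywhere, and to verify that the same bijection between connected subgraphs of $H\boxtimes H$ and pairs of weak homomorphisms from a connected graph to $H$ now carries the stronger edge-covering condition through both directions. Concretely, I would set
\begin{align*}
A &= \{ \varepsilon_{H}(Z) \ |\ Z \subseteq_C H\boxtimes H \text{ with } p_1(Z)=p_2(Z)=H \}, \\
B &= \{ m_G(f,g) \ |\ f,g: G \rightarrow H \text{ are edge surjective weak homomorphisms and $G$ is connected} \},
\end{align*}
and prove $A=B$; the theorem then follows from Proposition~\ref{prop:strongSpans}.

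For $A\subseteq B$, given $Z\subseteq_C H\boxtimes H$ with $p_1(Z)=p_2(Z)=H$, set $G=Z$, $f=p_1|_G$, $g=p_2|_G$. Since $Z$ is connected and the projections are weak homomorphisms, so are $f$ and $g$. The equalities $f(G)=p_1(Z)=H$ and $g(G)=p_2(Z)=H$ mean that $f,g$ hit every vertex \emph{and} every edge of $H$, so both are edge surjective; a direct computation identical to step~(3) of the previous theorem gives $m_G(f,g)=\varepsilon_{H}(Z)$.

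For $B\subseteq A$, given edge surjective $f,g:G\rightarrow H$ with $G$ connected and $m_G(f,g)=r$, define $\psi:V(G)\rightarrow V(H\boxtimes H)$ by $\psi(u)=(f(u),g(u))$. The same four-case analysis as in Theorem~\ref{thm:s-vertexSpanOnStrongProduct} shows $\psi$ is a weak homomorphism, so by Lemma~\ref{lem:connectedWH} the image $Z=\psi(G)$ is a connected subgraph of $H\boxtimes H$. The computation $\varepsilon_H(Z)=m_G(f,g)=r$ is unchanged. The one genuinely new step, and the main (though mild) obstacle, is verifying $p_1(Z)=H$ and $p_2(Z)=H$ at the edge level, not just at the vertex level. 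For an edge $xy\in E(H)$, edge surjectivity of $f$ yields $uv\in E(G)$ with $f(u)=x$, $f(v)=y$, so $x\neq y$. Then $\psi(u)=(x,g(u))$ and $\psi(v)=(y,g(v))$ are distinct and adjacent in $H\boxtimes H$ (by the four cases), so $\psi(u)\psi(v)\in E(Z)$ by the definition of image. Since $p_1(\psi(u))=x\neq y=p_1(\psi(v))$, the definition of image again gives $xy\in E(p_1(Z))$, hence $p_1(Z)=H$; symmetrically, $p_2(Z)=H$. Thus $r\in A$, and $A=B$ gives $\seSpan{H}=\max A$ as required.
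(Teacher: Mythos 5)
Your proof is correct and follows essentially the same route as the paper's: define the sets $A$ and $B$, show $A=B$ using the projections in one direction and the map $\psi(u)=(f(u),g(u))$ in the other, then invoke Proposition~\ref{prop:strongSpans}. The only difference is cosmetic — you take $Z=\psi(G)$ and explicitly verify $p_1(Z)=p_2(Z)=H$ at the edge level, a step the paper leaves implicit (it instead defines $Z$ as the subgraph induced on $\psi(V(G))$ and asserts edge surjectivity of $\psi$ onto it) — so your write-up is, if anything, slightly more complete on that point.
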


\begin{proof}
We define
\begin{align*}
    A &= \{ \varepsilon_{H}(Z) \ |\ Z \subseteq_C H\boxtimes H \text{ with } p_1(Z)=p_2(Z)=H \} \text{ and }\\
    B &= \{ m_G(f,g) \ |\ f,g: G \rightarrow H \text{ are edge surjective weak homomorphisms and $G$ is connected} \}.
\end{align*}
Similarly to the proof of Theorem \ref{thm:s-vertexSpanOnStrongProduct} we prove the assertion by proving that $A = B$. To show that $A\subseteq B$, let $r \in A$ be arbitrary. Let $Z \subseteq_C H \boxtimes H$ be such that $p_1(Z)=p_2(Z)=H$ and $\varepsilon_{H}(Z) =  r$.  Let $G=Z, f=p_1|_{G}$ and $g=p_2|_{G}$. Then 
\begin{enumerate}
    \item $G$ is a connected graph, 
    \item $f, g: G \rightarrow H$ are edge surjective weak homomorphisms, and
    \item \begin{align*}
    m_G(f,g) &= \min \{d_H(f(u), g(u))\ |\ u \in V(G) \} \\
           &= \min \{d_H(p_1(u), p_2(u))\ |\ u \in V(Z) \} \\
           &= \min \{d_H(x,y) \ |\ (x,y)\in V(Z) \} \\
           &= \varepsilon_{H}(Z) = r.
\end{align*}
\end{enumerate}

Therefore, $r \in B$ and we have proved that $A \subseteq B$.  
To show that $B\subseteq A$, let $r\in B$. Let $G$ be a connected graph and let $f,g: G \rightarrow H$ be edge surjective weak homomorphisms such that $m_G(f,g)=r$. Let $Z$ be the graph defined by
\[
  V(Z) = \{ (f(u), g(u))\ |\ u \in V(G) \}
\]
and, for any two vertices $(u,v)$ and $(u', v')$ of the graph  $Z$, $(u,v)(u', v')\in E(Z)$ if and only if one of the following three conditions is satisfied:
\begin{enumerate}
\item $u u' \in E(H)$ and $v=v'$, or 
\item $v v' \in E(H)$ and $u=u'$, or 
\item $u u' \in E(H)$ and $v v' \in E(H)$.
\end{enumerate}
Define $\psi:V(G)\rightarrow V(Z)$ by $\psi(u) = (f(u), g(u))$ for all $u\in V(G)$. Note that $\psi$ is an edge surjective weak homomorphism from $G$ to $Z$. Therefore, $Z=\psi(G)$ and since $G$ is connected, by Lemma \ref{lem:connectedWH} the graph $Z$ is also connected. Therefore, $Z \subseteq_C H\boxtimes H$. Since
\begin{align*}
    \varepsilon_{H}(Z) &= \min\{ d_H(x,y)\ |\ (x,y) \in V(Z) \} \\
                   &= \min\{d_H(f(u), g(u)) \ |\ u \in V(G) \} \\
                   &= m_G(f,g) = r,
\end{align*}
it follows that $r \in A$. Hence, $B\subseteq A$ and we have proved that $A=B$. It follows that $\max(A) = \max(B)$. Using Proposition \ref{prop:strongSpans} the assertion follows immediately.
\end{proof}

\subsection{Direct span}
For the direct span variant the players may only move according to the active movement rules. This rule can be described using aligned weak homomorphisms.

We now define the direct edge and the direct vertex span of a graph.

\begin{definition}
Let $f,g:G\rightarrow H$ be weak homomorphisms. We say that $f$ and $g$ are aligned weak homomorphisms, if for any $uv\in E(G)$,
\begin{displaymath}
f(u)f(v)\in E(H)    \Longleftrightarrow     g(u)g(v)\in E(H).
\end{displaymath}
\end{definition}
\begin{definition}\label{def:directSpans}
Let $H$ be a connected graph. 
Define 
\begin{align*}
  \deSpan{H} = \max \{ m_P(f,g) \ |\ &f,g: P \rightarrow H \text{ are edge surjective aligned weak homomorphisms} \\ &\text{and $P$ is a path} \}.  
\end{align*}

We call $\deSpan{H}$ the \emph{direct edge span} of the graph $H$.

Define 
\begin{align*}
     \dvSpan{H} = \max \{ m_P(f,g) \ |\ &f,g: P \rightarrow H \text{ are surjective aligned weak homomorphisms} \\ &\text{and $P$ is a path} \}. 
\end{align*}
 
We call $\dvSpan{H}$ the \emph{direct vertex span} of the graph $H$.
\end{definition}

\begin{observation}\label{obs:2}
Note that for any connected graph $H$, 
\begin{displaymath}
\deSpan{H}\leq \dvSpan{H}\leq \rad(H).
\end{displaymath}
\end{observation}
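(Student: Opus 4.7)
The plan is to prove both inequalities by essentially unpacking the definitions and invoking Lemma \ref{mLessRad}; no genuinely new idea is required beyond noting the inclusion relations between the sets over which the maxima are taken.

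For the first inequality $\deSpan{H}\leq \dvSpan{H}$, I would argue as follows. By definition, every edge surjective weak homomorphism is in particular surjective. Hence any pair $(f,g)$ of aligned \emph{edge} surjective weak homomorphisms from a path $P$ to $H$ is also a pair of aligned surjective weak homomorphisms from $P$ to $H$. Consequently the set
\[
\{ m_P(f,g) \mid f,g : P \rightarrow H \text{ are edge surjective aligned weak homomorphisms, $P$ a path}\}
\]
is a subset of
\[
\{ m_P(f,g) \mid f,g : P \rightarrow H \text{ are surjective aligned weak homomorphisms, $P$ a path}\},
\]
and the maximum of a subset is at most the maximum of the whole. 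This yields $\deSpan{H}\leq \dvSpan{H}$.

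For the second inequality $\dvSpan{H}\leq \rad(H)$, I would apply Lemma \ref{mLessRad} directly. Any path $P$ is connected, and any pair $f,g : P \rightarrow H$ realising a value $m_P(f,g)$ in the set defining $\dvSpan{H}$ is a pair of surjective weak homomorphisms from a connected graph to $H$ (the alignment condition is an additional constraint that is irrelevant to the bound). Hence by Lemma \ref{mLessRad} we have $m_P(f,g)\leq \rad(H)$ for each such pair. Taking the maximum over all such pairs gives $\dvSpan{H}\leq \rad(H)$.

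The main (very mild) obstacle is simply to check that the \emph{aligned} condition does not interfere with either argument: it is an extra restriction that only shrinks the families of homomorphisms under consideration, and the bound from Lemma \ref{mLessRad} depends solely on the surjectivity of the weak homomorphisms and the connectedness of the domain, both of which remain available. Thus the same reasoning that produced $\seSpan{H}\leq \svSpan{H}\leq \rad(H)$ in Observation \ref{obs:1} carries over verbatim.
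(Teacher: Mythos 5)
Your proposal is correct and follows exactly the reasoning the paper intends (the paper states this observation without proof, relying on the same argument it sketches for Observation \ref{obs:1}): the first inequality is the inclusion of the defining sets since edge surjectivity includes surjectivity by definition, and the second is a direct application of Lemma \ref{mLessRad} to surjective weak homomorphisms from a connected domain, with the alignment condition playing no role in either bound.
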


\begin{observation}\label{obs:pathtoconnDirect}
Note that, 
\begin{align*}
  \deSpan{H} = \max \{ m_G(f,g) \ |\ &f,g: G \rightarrow H \text{ are edge surjective aligned weak homomorphisms} \\ &\text{and $G$ is connected} \}
\end{align*} and 
\begin{align*}
     \dvSpan{H} = \max \{ m_G(f,g) \ |\ &f,g: G \rightarrow H \text{ are surjective aligned weak homomorphisms} \\ &\text{and $G$ is connected} \} 
\end{align*}
can be proved similarly as Proposition \ref{prop:strongSpans}.
\end{observation}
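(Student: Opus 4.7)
The plan is to mimic the proof of Proposition \ref{prop:strongSpans}, the only new ingredient being to check that the alignment property survives pre-composition with a walk. I will sketch the vertex case in detail; the edge case is identical after substituting ``walk through all vertices'' by ``walk through all edges''.

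Let
\begin{align*}
A &= \{ m_P(f,g) \mid f,g: P \rightarrow H \text{ surjective aligned WH, } P \text{ a path}\}, \\
B &= \{ m_G(f,g) \mid f,g: G \rightarrow H \text{ surjective aligned WH, } G \text{ connected}\}.
\end{align*}
The inclusion $A\subseteq B$ is immediate since paths are connected, so $\max A\leq \max B$. For the reverse, fix a connected graph $G$ and surjective aligned weak homomorphisms $f,g:G\rightarrow H$ with $m_G(f,g)=r$. Choose any walk $W=(w_0,w_1,\ldots,w_k)$ in $G$ that visits every vertex of $G$ (such a walk exists because $G$ is connected), let $P$ be the path on vertices $p_0,p_1,\ldots,p_k$ with $p_ip_{i+1}\in E(P)$, and define $h:P\rightarrow G$ by $h(p_i)=w_i$. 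Set $f'=f\circ h$ and $g'=g\circ h$.

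The verification splits into three points. First, $f'$ and $g'$ are weak homomorphisms (composition of weak homomorphisms is a weak homomorphism) and are surjective because $W$ covers $V(G)$ while $f,g$ are surjective onto $V(H)$. Second, and this is the step that is new relative to Proposition \ref{prop:strongSpans}, the pair $f',g'$ is aligned: for any edge $p_ip_{i+1}\in E(P)$, $h(p_i)h(p_{i+1})=w_iw_{i+1}\in E(G)$, and since $f,g$ are aligned we have $f(w_i)f(w_{i+1})\in E(H)\Longleftrightarrow g(w_i)g(w_{i+1})\in E(H)$, which is exactly the alignment condition for $f',g'$ on $p_ip_{i+1}$. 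Third, distance is preserved:
\[
m_P(f',g')=\min_{0\le i\le k} d_H\bigl(f(w_i),g(w_i)\bigr)=\min_{u\in V(G)}d_H\bigl(f(u),g(u)\bigr)=m_G(f,g)=r,
\]
where the middle equality uses that $\{w_0,\ldots,w_k\}=V(G)$. Hence $r\in A$, giving $B\subseteq A$ and therefore $\max A=\max B$, which yields the stated equality for $\dvSpan{H}$.

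For $\deSpan{H}$, one repeats the argument, but takes $W$ to be a walk that traverses every edge of $G$ at least once (again available since $G$ is connected, e.g.\ by an edge-doubling tour), and replaces ``surjective'' by ``edge surjective'' throughout; the alignment check is word-for-word the same. The only conceptual subtlety, and hence the main point to verify, is the alignment preservation under pre-composition with $h$, and this reduces to the observation that every edge of $P$ is mapped by $h$ onto an edge of $G$ on which alignment is already assumed.
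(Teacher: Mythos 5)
Your proof is correct and follows essentially the same route the paper intends: it reproduces the walk-and-path construction from Proposition \ref{prop:strongSpans} and adds the one genuinely new check, namely that alignment of $f,g$ transfers to $f\circ h, g\circ h$ because every edge $p_ip_{i+1}$ of $P$ is sent by $h$ to an actual edge $w_iw_{i+1}$ of $G$. That is precisely the verification the paper leaves implicit when it says the observation ``can be proved similarly,'' so nothing further is needed.
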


For any connected graph $H$, the following theorem shows that it is not necessary to consider all corresponding weak homomorphisms from all possible connected graphs $G$. Instead it suffices to consider only connected subgraphs of $H\times H$ and the projections $p_1, p_2: H\times H \rightarrow H$.

\begin{theorem}\label{thm:ev-spanOnDirectProduct}
If $H$ is a connected graph, then 
\[
    \dvSpan{H} = \max \{ \varepsilon_{H}(Z) \ |\ Z \subseteq_C H\times H \text{ with } p_1(V(Z))=p_2(V(Z))=V(H) \}
\]
and
\[
    \deSpan{H} = \max \{ \varepsilon_{H}(Z) \ |\ Z \subseteq_C H\times H \text{ with } p_1(Z)=p_2(Z)=H \}.
\]
\end{theorem}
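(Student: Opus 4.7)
The plan is to adapt the proofs of Theorems \ref{thm:s-vertexSpanOnStrongProduct} and \ref{thm:s-edgeSpanOnStrongProduct}, replacing the strong product $H \boxtimes H$ by the direct product $H \times H$ and replacing surjective (respectively, edge surjective) weak homomorphisms by surjective (respectively, edge surjective) \emph{aligned} weak homomorphisms throughout. Using Observation \ref{obs:pathtoconnDirect}, the vertex case reduces to proving the set equality $A = B$, where
\begin{align*}
A &= \{\varepsilon_H(Z) \mid Z \subseteq_C H\times H,\ p_1(V(Z)) = p_2(V(Z)) = V(H)\},\\
B &= \{m_G(f,g) \mid f,g:G\to H \text{ are surjective aligned weak homomorphisms, } G \text{ connected}\}.
\end{align*}

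For $A \subseteq B$, given $Z$ as in the definition of $A$, I would take $G = Z$ and $f = p_1|_Z$, $g = p_2|_Z$. The verifications that $G$ is connected, that $f,g$ are surjective weak homomorphisms, and that $m_G(f,g) = \varepsilon_H(Z)$ are word-for-word the same as in the proof of Theorem \ref{thm:s-vertexSpanOnStrongProduct}. The one additional step is to observe that $f$ and $g$ are aligned: every edge of $H \times H$ has the form $(a,b)(a',b')$ with both $aa' \in E(H)$ and $bb' \in E(H)$, so for any edge $uv \in E(Z)$ both $p_1(u)p_1(v)$ and $p_2(u)p_2(v)$ are edges of $H$, and the biconditional in the alignment definition holds trivially.

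For $B \subseteq A$, the essential step, and the only place where alignment is genuinely used, is to verify that the map $\psi : V(G) \to V(H)\times V(H)$ given by $\psi(u) = (f(u),g(u))$ is a weak homomorphism from $G$ to $H \times H$. For any $uv \in E(G)$, alignment of $f,g$ gives two cases: either both $f(u)f(v) \in E(H)$ and $g(u)g(v) \in E(H)$, in which case $\psi(u)\psi(v) \in E(H\times H)$ directly from the definition of the direct product; or neither is an edge of $H$, and since $f,g$ are weak homomorphisms this forces $f(u) = f(v)$ and $g(u) = g(v)$, hence $\psi(u) = \psi(v)$. Setting $Z = \psi(G)$, connectedness of $Z$ follows from Lemma \ref{lem:connectedWH}, the identities $p_1(V(Z)) = f(V(G)) = V(H)$ and $p_2(V(Z)) = g(V(G)) = V(H)$ follow from surjectivity of $f$ and $g$, and $\varepsilon_H(Z) = m_G(f,g)$ is computed as in the strong case.

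The edge version is obtained by the same two inclusions, with \emph{surjective} replaced by \emph{edge surjective} throughout. In the $A \subseteq B$ direction, $p_1|_Z, p_2|_Z$ remain edge surjective by the projection argument used in Theorem \ref{thm:s-edgeSpanOnStrongProduct}, and alignment holds as above. In the $B \subseteq A$ direction one can again take $Z = \psi(G)$: given $ab \in E(H)$, edge surjectivity of $f$ yields $uv \in E(G)$ with $f(u)=a$ and $f(v)=b$, and then alignment forces $g(u)g(v) \in E(H)$, so $\psi(u)\psi(v)$ is an edge of $H\times H$ projecting onto $ab$ under $p_1$; the argument for $p_2$ is symmetric. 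I expect the only real obstacle to be keeping track of both halves of the alignment biconditional in the verification that $\psi$ lands in $H\times H$; this is precisely what distinguishes the direct product case from the strong product case, where the extra "horizontal" and "vertical" edges of $H\boxtimes H$ absorb the unaligned cases automatically.
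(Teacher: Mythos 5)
Your proposal is correct and follows exactly the route the paper intends: the paper's own "proof" is only the remark that the argument mirrors Theorems \ref{thm:s-vertexSpanOnStrongProduct} and \ref{thm:s-edgeSpanOnStrongProduct} with the construction adapted to $H\times H$, and you have supplied precisely the missing details. In particular, you correctly identify the two points where alignment matters: every edge of $H\times H$ projects to an edge in both factors (so the restricted projections are automatically aligned), and for $\psi(u)=(f(u),g(u))$ the alignment biconditional collapses the four cases of the strong-product proof to the two that land in $E(H\times H)$ or give $\psi(u)=\psi(v)$.
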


\begin{proof}
The proof follows the same line of thought as the proofs of Theorems \ref{thm:s-vertexSpanOnStrongProduct} and \ref{thm:s-edgeSpanOnStrongProduct}. The only differences are in the construction of the corresponding connected subgraphs of (in this case) $H \times H$. 
\end{proof}

\subsection{Cartesian span}
For the Cartesian span variant the players may only move according to the lazy movement rules. These rules can be described using opposite weak homomorphisms.

We now define the Cartesian edge and the Cartesian vertex span of a graph.

\begin{definition}
Let $f,g:G\rightarrow H$ be weak homomorphisms. We say that $f$ and $g$ are opposite weak homomorphisms, if for any $uv\in E(G)$,
\begin{displaymath}
f(u)f(v)\in E(H)    \Leftrightarrow     g(u)=g(v).
\end{displaymath}
\end{definition}

\begin{definition}\label{def:cartesianSpans}
Let $H$ be a connected graph. 
Define 
\begin{align*}
  \ceSpan{H} = \max \{ m_P(f,g) \ |\ &f,g: P \rightarrow H \text{ are edge surjective opposite weak homomorphisms} \\ &\text{and $P$ is a path} \}.  
\end{align*}
     
We call $\ceSpan{H}$ the \emph{Cartesian edge span} of the graph $H$.

Define 
\begin{align*}
     \cvSpan{H} = \max \{ m_P(f,g) \ |\ &f,g: P \rightarrow H \text{ are surjective opposite weak homomorphisms} \\ &\text{and $P$ is a path} \}. 
\end{align*}
 
We call $\cvSpan{H}$ the \emph{Cartesian vertex span} of the graph $H$.
\end{definition}
\begin{observation}\label{obs:3}
Note that for any connected graph $H$, 
\begin{displaymath}
\ceSpan{H}\leq \cvSpan{H}\leq \rad(H).
\end{displaymath}
\end{observation}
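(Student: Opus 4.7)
The plan is to verify the two inequalities $\ceSpan{H}\leq \cvSpan{H}$ and $\cvSpan{H}\leq \rad(H)$ independently, after first confirming that both maxima are well-defined.

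The first inequality is a set-inclusion argument. By the definition given in Section \ref{sec:preliminary}, an edge surjective weak homomorphism is in particular surjective. Consequently, every pair $(f,g)$ of edge surjective opposite weak homomorphisms from a path $P$ to $H$ is also a pair of surjective opposite weak homomorphisms from $P$ to $H$. Hence the set of distances $m_P(f,g)$ used in the definition of $\ceSpan{H}$ is contained in the set used in the definition of $\cvSpan{H}$, and the claim $\ceSpan{H}\leq \cvSpan{H}$ follows from monotonicity of the maximum.

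The second inequality is a direct application of Lemma \ref{mLessRad}. Any pair $(f,g)$ realising $\cvSpan{H}$ consists of surjective weak homomorphisms from a path $P$ to $H$. Since a path is connected, Lemma \ref{mLessRad} yields $m_P(f,g)\leq \rad(H)$, and taking the maximum gives $\cvSpan{H}\leq \rad(H)$.

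The main (and only nontrivial) obstacle is to guarantee that the sets defining $\ceSpan{H}$ and $\cvSpan{H}$ are non-empty, since $H$ is now subject to the restrictive \emph{opposite} condition $f(u)f(v)\in E(H)\Leftrightarrow g(u)=g(v)$. I would address this by an explicit interleaving construction: pick walks $W_A=(a_0,a_1,\dots,a_k)$ and $W_B=(b_0,b_1,\dots,b_\ell)$ that traverse every edge (respectively every vertex) of $H$, and build a path $P$ of length $k+\ell$ on which one projection advances along $W_A$ while the other stays, then the roles switch, and so on. By construction, on each edge of $P$ exactly one of $f,g$ uses an edge of $H$ while the other is constant, so the opposite condition is satisfied, and both projections are edge surjective (respectively surjective). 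Combined with the upper bound $\rad(H)$ from Lemma \ref{mLessRad} and the fact that the sets consist of non-negative integers, this guarantees that the maxima exist, completing the justification of Observation \ref{obs:3}.
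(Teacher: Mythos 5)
Your proof is correct and follows essentially the same reasoning the paper uses (the paper states this as an unproved observation, mirroring the justification it gives for Observation \ref{obs:1}: the relevant sets are non-empty sets of non-negative integers, the edge-surjective set is contained in the surjective set, and Lemma \ref{mLessRad} bounds everything by $\rad(H)$). Your explicit interleaving construction to verify non-emptiness under the opposite-homomorphism constraint is a detail the paper leaves implicit, and it is sound.
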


\begin{observation}\label{obs:pathtoconnCart}
Note that, 
\begin{align*}
  \ceSpan{H} = \max \{ m_G(f,g) \ |\ &f,g: G \rightarrow H \text{ are edge surjective opposite weak homomorphisms} \\ &\text{and $G$ is connected} \} 
\end{align*} and 
\begin{align*}
     \cvSpan{H} = \max \{ m_G(f,g) \ |\ &f,g: G \rightarrow H \text{ are surjective opposite weak homomorphisms} \\ &\text{and $G$ is connected} \}
\end{align*}
can be proved similarly as Proposition \ref{prop:strongSpans}.
\end{observation}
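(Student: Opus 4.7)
The plan is to mirror the proof of Proposition~\ref{prop:strongSpans}, inserting a single extra verification that is specific to opposite weak homomorphisms. For the vertex version, I would denote the right-hand side by $B$ and the set defining $\cvSpan{H}$ (the same thing but with $G$ restricted to paths) by $A$. The inclusion $A\subseteq B$ is immediate since every path is a connected graph. For the reverse inclusion, given a connected graph $G$ and surjective opposite weak homomorphisms $f,g\colon G\to H$ realising some value $r=m_G(f,g)\in B$, I would pick a walk $W=(w_0,w_1,\ldots,w_k)$ visiting every vertex of $G$, form the path $P$ with vertices $p_0,\ldots,p_k$ (and edges $p_ip_{i+1}$), and define $h\colon P\to G$ by $h(p_i)=w_i$.

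The routine part is to check that $f\circ h$ and $g\circ h$ are surjective weak homomorphisms from $P$ to $H$ and that $m_P(f\circ h,g\circ h)=m_G(f,g)$; both reduce to the fact that $W$ hits every vertex of $G$, verbatim as in Proposition~\ref{prop:strongSpans}. The only genuinely new step, and the one I expect to be the crux, is verifying that the composed pair is still opposite. Fix $p_ip_{i+1}\in E(P)$. Since $W$ is a walk, $w_iw_{i+1}\in E(G)$, so opposition of $f$ and $g$ on this edge of $G$ reads
\[
  f(w_i)f(w_{i+1})\in E(H)\ \Longleftrightarrow\ g(w_i)=g(w_{i+1}),
\]
which is precisely the opposite condition for $f\circ h$ and $g\circ h$ on $p_ip_{i+1}$. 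The essential observation is that $h$ never collapses an edge of $P$, because $W$ traverses a genuine edge of $G$ at every step; this non-collapsing property is exactly what makes the biconditional pull back cleanly.

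For the edge-surjective equality I would run the identical argument, but choose $W$ to traverse every edge of $G$ (which is possible because $G$ is connected). Edge surjectivity of $f\circ h$ (and of $g\circ h$) then follows: any $xy\in E(H)$ lifts, by edge surjectivity of $f$, to some $uv\in E(G)$, and $W$ traverses that $uv$ at some step $i$, so $(f\circ h)(p_i)(f\circ h)(p_{i+1})=xy$. I do not anticipate any further obstacle; once the opposition-preservation biconditional above is stated, the rest of the argument is a copy-and-paste from Proposition~\ref{prop:strongSpans}.
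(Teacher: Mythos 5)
Your proof is correct and is exactly the argument the paper intends when it says the observation ``can be proved similarly as Proposition~\ref{prop:strongSpans}'': the only new ingredient is checking that oppositeness survives composition with $h$, and your verification---that $h$ sends each edge $p_ip_{i+1}$ of $P$ to a genuine edge $w_iw_{i+1}$ of $G$, so the biconditional $f(w_i)f(w_{i+1})\in E(H)\Leftrightarrow g(w_i)=g(w_{i+1})$ transfers verbatim---is precisely right. The edge-surjective case, with $W$ chosen to traverse every edge, is likewise handled as the paper does.
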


For any connected graph $H$, the following theorem shows that it is not necessary to consider all corresponding weak homomorphisms from all possible connected graphs $G$. Instead it suffices to consider only connected subgraphs of $H\BoxProduct H$ and the projections $p_1, p_2: H\BoxProduct H \rightarrow H$.

\begin{theorem}\label{thm:s-spanOnCartesianProduct}
If $H$ is a connected graph, then 
\[
    \cvSpan{H} = \max \{ \varepsilon_{H}(Z) \ |\ Z \subseteq_C H\BoxProduct H \text{ with } p_1(V(Z))=p_2(V(Z))=V(H) \}
\]
and
\[
    \ceSpan{G} = \max \{ \varepsilon_{H}(Z) \ |\ Z \subseteq_C H\BoxProduct H \text{ with } p_1(Z)=p_2(Z)=H \},
\]
\end{theorem}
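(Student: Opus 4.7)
The plan is to follow the same template as the proofs of Theorems \ref{thm:s-vertexSpanOnStrongProduct} and \ref{thm:s-edgeSpanOnStrongProduct}, with Observation \ref{obs:pathtoconnCart} playing the role that Proposition \ref{prop:strongSpans} played there, and with the \emph{opposite} property of weak homomorphisms replacing the absence of any structural condition. Concretely, to prove the vertex statement I would define
\begin{align*}
A &= \{\varepsilon_H(Z)\ |\ Z\subseteq_C H\BoxProduct H \text{ with } p_1(V(Z))=p_2(V(Z))=V(H)\},\\
B &= \{m_G(f,g)\ |\ f,g:G\to H \text{ are surjective opposite weak homomorphisms and $G$ is connected}\},
\end{align*}
and show $A=B$. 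For the edge statement the only change is to require $p_1(Z)=p_2(Z)=H$ in $A$ and that $f,g$ be edge surjective in $B$.

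For $A\subseteq B$, given $Z\subseteq_C H\BoxProduct H$ satisfying the projection condition, I set $G=Z$, $f=p_1|_Z$, $g=p_2|_Z$. The projections are weak homomorphisms, so the only nontrivial point is verifying that $f,g$ are \emph{opposite}. This follows from the very definition of the Cartesian product: every edge $(u,v)(u',v')\in E(Z)\subseteq E(H\BoxProduct H)$ satisfies either ($u=u'$ and $vv'\in E(H)$) or ($uu'\in E(H)$ and $v=v'$), and these two cases are mutually exclusive (the vertices are distinct). Thus $f(x)f(y)\in E(H)$ precisely when $g(x)=g(y)$. The computation $m_G(f,g)=\varepsilon_H(Z)$ is identical to the one in the strong case.

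For $B\subseteq A$, given opposite (and, in the edge case, edge surjective) weak homomorphisms $f,g:G\to H$, I define $\psi:V(G)\to V(H\BoxProduct H)$ by $\psi(u)=(f(u),g(u))$ and take $Z=\psi(G)$ (or, in the edge case, the subgraph of $H\BoxProduct H$ induced by $V(\psi(G))$, analogously to the construction in Theorem \ref{thm:s-edgeSpanOnStrongProduct}). The key step is to check that $\psi$ is a weak homomorphism into $H\BoxProduct H$: for any $uv\in E(G)$, the opposite property leaves only two possibilities, namely ($f(u)=f(v)$ and $g(u)g(v)\in E(H)$) or ($f(u)f(v)\in E(H)$ and $g(u)=g(v)$), and in each case $\psi(u)\psi(v)$ is a Cartesian edge (or $\psi(u)=\psi(v)$). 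Connectedness of $Z$ then follows from Lemma \ref{lem:connectedWH}, and the projection conditions follow from the (edge) surjectivity of $f,g$; in the edge version one uses that every edge $xy\in E(H)$ is hit by some $f(u)f(v)$ with $f(u)\neq f(v)$, and the corresponding edge $\psi(u)\psi(v)$ of $Z$ projects onto $xy$ under $p_1$.

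The main obstacle is the verification that the opposite condition is exactly the right algebraic shadow of the Cartesian product edge set, both directions at once: edges of $H\BoxProduct H$ force a projection-pair to be opposite, and conversely the opposite property is exactly what is needed to make $\psi$ land in $H\BoxProduct H$ rather than in $H\boxtimes H$. Once this equivalence is recorded, the remainder of the argument is mechanical, and combining $\max(A)=\max(B)$ with Observation \ref{obs:pathtoconnCart} yields both claimed equalities.
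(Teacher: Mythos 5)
Your proposal is correct and follows exactly the route the paper intends: the paper's own proof of this theorem is only a two-sentence remark that it proceeds as in Theorems \ref{thm:s-vertexSpanOnStrongProduct} and \ref{thm:s-edgeSpanOnStrongProduct} with the construction adapted to $H\BoxProduct H$, and you have supplied precisely the missing details, in particular the key verification that the \emph{opposite} condition is equivalent to $\psi(u)\psi(v)$ being a Cartesian edge (in both directions). No gaps; your appeal to Observation \ref{obs:pathtoconnCart} in place of Proposition \ref{prop:strongSpans} is exactly what the paper relies on.
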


\begin{proof}
Again, the proof follows the same line of thought as the proofs of Theorems \ref{thm:s-vertexSpanOnStrongProduct} and \ref{thm:s-edgeSpanOnStrongProduct}. The only differences are in the construction of the corresponding connected subgraphs of (in this case) $H \BoxProduct H$.
\end{proof}

\section{0-span graphs and graphs with equal vertex and edge span variant}\label{tri}

In this section we focus on 0-span graphs; i.e., graphs in which it is impossible to keep a positive safety distance at all points in time in any of the above introduced models. We also construct an infinite family of graphs for which the corresponding vertex and edge span variants are equal. 

First we give and prove the following characterisations of graphs with strong vertex span, strong edge span, direct vertex span and direct edge span equal to 0. In the first result that follows, we prove the assertion formally, by constructing a corresponding subgraph with (edge) surjective projections and then present the same result in the language of Alice and Bob moving on a graph. For the later results we only prove them in the setting of players in the graph.

\begin{theorem}\label{thm:ssZero}
Let $H$ be any connected graph. The following statements are equivalent.
\begin{enumerate}
    \item $\seSpan{H}=0$.
    \item $\svSpan{H}=0$.
    \item $|V(H)|=1$. 
\end{enumerate}
\end{theorem}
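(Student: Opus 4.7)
I plan to establish the cycle of implications $(3) \Rightarrow (2) \Rightarrow (1) \Rightarrow (3)$. Two of the three links come essentially for free: if $|V(H)| = 1$, then every weak homomorphism into $H$ is the constant map, so $m_G(f,g) = 0$ for every admissible pair, giving $\svSpan{H} = 0$; and $(2) \Rightarrow (1)$ is already recorded in Observation \ref{obs:1} as $\seSpan{H} \le \svSpan{H}$. So the real content lies in the contrapositive of $(1) \Rightarrow (3)$: whenever $H$ is connected with $|V(H)| \ge 2$, one must produce $\seSpan{H} \ge 1$.

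For this direction, Theorem \ref{thm:s-edgeSpanOnStrongProduct} reduces the task to exhibiting a connected subgraph $Z \subseteq_C H \boxtimes H$ satisfying $p_1(Z) = p_2(Z) = H$ and $\varepsilon_{H}(Z) \ge 1$; the last condition simply says that $V(Z)$ avoids the diagonal of $V(H) \times V(H)$. Since $H$ is connected with at least one edge, doubling every edge of $H$ produces a multigraph in which each vertex has even degree, hence an Eulerian circuit, which corresponds to a closed walk $W = (v_0, v_1, \ldots, v_L)$ with $v_L = v_0$ that traverses every edge of $H$. I would then take $Z$ to be the subgraph of $H \boxtimes H$ spanned by the vertices $(v_i, v_{(i+1) \bmod L})$ for $0 \le i < L$, joined cyclically by the direct-product edges $(v_i, v_{(i+1) \bmod L})(v_{(i+1) \bmod L}, v_{(i+2) \bmod L})$, each of which lies in $H \boxtimes H$ because both coordinates are traversing edges of $H$.

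The main obstacle is that a single construction has to satisfy three separate requirements simultaneously. The cyclic shift ensures connectedness of $Z$ (it is the image of a cycle under the map $i \mapsto (v_i, v_{(i+1) \bmod L})$) and that no vertex of $Z$ has equal coordinates, since $v_i$ and $v_{(i+1) \bmod L}$ are adjacent in $H$ and therefore distinct; this takes care of both the connectedness condition and $\varepsilon_{H}(Z) \ge 1$. Edge-surjectivity of $p_1$ and $p_2$ is automatic, because $p_1$ sends the listed edges of $Z$ to the edges of $W$ in their natural order, while $p_2$ does the same after a one-step shift, and $W$ traverses every edge of $H$. Combining these observations with Theorem \ref{thm:s-edgeSpanOnStrongProduct} yields $\seSpan{H} \ge \varepsilon_{H}(Z) \ge 1$, closing the cycle of implications. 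The degenerate case $H = K_2$ (where $W = (a,b,a)$ and $Z$ is the single edge between $(a,b)$ and $(b,a)$) is a good sanity check that the construction does not collapse.
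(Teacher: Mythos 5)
Your argument is correct, but the construction you use for the essential implication $(1)\Rightarrow(3)$ (in contrapositive form) is genuinely different from the paper's. The paper splits into two cases: for $|V(H)|=2$ it takes $Z$ to be the single edge $(u,v)(v,u)$, and for $|V(H)|>2$ it fixes an edge $u_0v_0$, decomposes $H-\langle\{u_0,v_0\}\rangle_H$ into components $H_1,\dots,H_m$, and assembles $Z$ from the pieces $H_i\boxtimes G$ and $G\boxtimes H_i$ (with $G$ the edge $u_0v_0$) glued to the two swap vertices $(u_0,v_0)$ and $(v_0,u_0)$ --- informally, one player parks on the edge while the other performs a BFS traversal, the two swapping along $u_0v_0$ when needed. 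Your Eulerian-circuit construction replaces all of this with a single uniform object: the closed walk $W=(v_0,\dots,v_L)$ through the doubled edge set yields the cyclically shifted vertex set $\{(v_i,v_{(i+1)\bmod L})\}$ joined by direct-product edges, and I checked that all three required properties hold --- connectedness (it is the image of a cycle), avoidance of the diagonal (consecutive walk vertices are adjacent, hence distinct, so $\varepsilon_H(Z)\geq 1$), and edge surjectivity of both projections (the edges of $Z$ project under $p_1$ to the edges of $W$ in order and under $p_2$ to the same list shifted by one, and $W$ covers $E(H)$). Your approach buys brevity, the elimination of the case split on $|V(H)|$, and --- since every edge of your $Z$ is a direct-product edge --- it in fact establishes the analogous statement for the direct span (Theorem \ref{thm:dsZero}) simultaneously, which the paper proves by a separate modification of its strategy. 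What the paper's construction buys in exchange is a template in the Alice-and-Bob language that it then adapts to the remaining movement-rule variants, including the Cartesian one where your both-players-move trick is not available.
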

\begin{proof}
Let $|V(H)|=1$. Then $V(H)=\{u\}$ for some $u$ and, obviously, $\svSpan{H}=\seSpan{H}=0$. 
Next, let $\seSpan{H}=0$ or $\svSpan{H}=0$. We show that $|V(H)|=1$. Suppose that $|V(H)|>1$. Let $u,v\in V(H)$ such that $uv\in E(H)$. 

If $V(H)=\{u,v\}$, then let $Z$ be defined as follows. Let
\begin{math}
V(Z)=\{(u,v),(v,u)\}
\end{math}
and
\begin{math}
E(Z)=\{(u,v)(v,u)\}.
\end{math}
Then $\varepsilon_{H}(Z)=1$ and, therefore, $\svSpan{H}>0$ and $\seSpan{H}>0$. 

Let $|V(H)|>2$, $u_0 v_0 \in E(H)$ and let $G$ be the graph defined by $V(G)=\{u_0,v_0\}$ and $E(G)=\{u_0 v_0\}$. Also, let $H_1$, $H_2$, $\ldots$, $H_m$ be the components of $H-G$. Since $V(H)\neq \{u_0, v_0\}$, it follows that $m>0$. For each $i\in \{1, 2, \ldots, m\}$, let 
\begin{displaymath}
U_i = N(u_0) \cap V(H_i)
\end{displaymath} 
and 
\begin{displaymath}
V_i = N(v_0) \cap V(H_i).
\end{displaymath}
We define a graph $Z$ as follows. Let
\begin{displaymath}
V(Z)=\left(\bigcup_{i=1}^{m}V(H_i\boxtimes G)\right) \cup \left(\bigcup_{i=1}^{m}V(G\boxtimes H_i)\right)\cup \{(u_0,v_0),(v_0,u_0)\}
\end{displaymath}
and
\begin{align*}
E(Z)=&\left(\bigcup_{i=1}^{m}E(H_i\boxtimes G)\right)  \cup \left(\bigcup_{i=1}^{m}E(G\boxtimes H_i)\right)\cup \{(u_0,v_0)(v_0,u_0)\} \cup\\ &
\bigcup_{i=1}^m \left(\left(\bigcup_{u\in U_i} \{(u,v_0)(u_0,v_0)\} \right) \cup \left(\bigcup_{v \in V_i} \{(v,u_0)(v_0,u_0)\} \right)\right) \cup \\ &
\bigcup_{i=1}^m \left(\left(\bigcup_{u\in U_i} \{(v_0,u)(v_0,u_0)\} \right) \cup \left(\bigcup_{v \in V_i} \{(u_0,v)(u_0,v_0)\} \right)\right).
\end{align*}

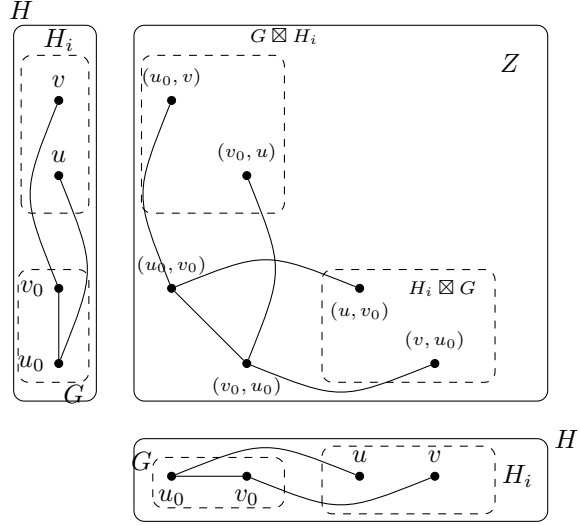
\begin{figure}[!ht]
	\centering
\begin{tikzpicture}[scale=1]
\tikzstyle{rn}=[circle, fill=black,draw, inner sep=0pt, minimum size=3pt]

\draw[rounded corners] (0, 0) rectangle (5.5, 5);
\node (Z) at (5, 4.5) {$Z$};
\draw[rounded corners] (-1.6, 0) rectangle (-0.5, 5);
\node (Hh) at (5.75, -0.5) {$H$};
\draw[rounded corners] (0, -0.5) rectangle (5.5, -1.6);
\node (Hv) at (-1.5, 5.2) {$H$};

\draw[rounded corners, dashed] (-1.5, 2.5) rectangle (-0.6, 4.6);
\node (Hvi) at (-1, 4.8) {$H_i$};
\node at (0.1,-0.8) {$G$};

\draw[rounded corners, dashed] (2.5, -1.5) rectangle (4.8, -0.6);
\node (Hhi) at (5.1, -1) {$H_i$};
\node at (-0.8,0.1) {$G$};

\draw[rounded corners, dashed] (-1.54, 0.25) rectangle (-0.6, 1.75);
\node [style=rn] (uh0) at (0.5, -1) [label=below:$u_0$] {}; 
\node [style=rn] (vh0) at (1.5, -1) [label=below:$v_0$] {}; 
\draw (uh0)--(vh0);

\draw[rounded corners, dashed] (0.25, -1.42) rectangle (2, -0.75);
\node [style=rn] (uv0) at (-1, 0.5) [label=left:$u_0$] {}; 
\node [style=rn] (vv0) at (-1, 1.5) [label=left:$v_0$] {}; 
\draw (uv0)--(vv0);

\node [style=rn] (u0v0) at (0.5, 1.5) [label=above:{\scriptsize $(u_0,v_0)$}] {};
\node [style=rn] (v0u0) at (1.5, 0.5) [label=below:{\scriptsize $(v_0,u_0)$}] {};
\draw (u0v0)--(v0u0);

\draw[rounded corners, dashed] (2.5, 0.25) rectangle (4.8, 1.75);
\node at (4.8, 1.5) [label=left:{\scriptsize $H_i \boxtimes G$}] {};

\draw[rounded corners, dashed] (0.1, 2.5) rectangle (2, 4.6);
\node at (2, 4.5) [label=above:{\scriptsize $G \boxtimes H_i$}] {};

\node [style=rn] (uh) at (3, -1) [label=above:{$u$}] {};
\node [style=rn] (uv) at (-1, 3) [label=above:{$u$}] {};
\draw (uh0) .. controls (1.75, -0.5) .. (uh);
\draw (uv0) .. controls (-0.5, 1.75) .. (uv);

\node [style=rn] (uv0) at (3, 1.5) [label=below:{\scriptsize $(u,v_0)$}] {};
\draw (uv0) .. controls (1.75, 2) .. (u0v0);

\node [style=rn] (v0u) at (1.5, 3) [label=above:{\scriptsize $(v_0, u)$}] {};
\node [style=rn] (u0v) at (0.5, 4) [label=above:{\scriptsize $(u_0, v)$}] {};
\draw (v0u) .. controls (2, 1.75) .. (v0u0);
\draw (u0v) .. controls (0, 2.75) .. (u0v0);

\node [style=rn] (vh) at (4, -1) [label=above:{$v$}] {};
\node [style=rn] (vv) at (-1, 4) [label=above:{$v$}] {};
\draw (vh0) .. controls (2.75, -1.5) .. (vh);
\draw (vv0) .. controls (-1.5, 2.75) .. (vv);

\node [style=rn] (vu0) at (4, 0.5) [label=above:{\scriptsize $(v,u_0)$}] {};
\draw (vu0) .. controls (2.75, 0) .. (v0u0);
        
\end{tikzpicture}
\caption{Sketch of the construction of the graph $Z$ in the case of strong spans.}
		\label{fig:sketch1}
\end{figure}

Figure \ref{fig:sketch1} shows an example of how the vertices and edges are added to $Z$ for any component $H_i$.
It is clear that $Z$ is connected and that $p_1(Z)=p_2(Z)=H$ (and therefore also $p_1(V(Z))=p_2(V(Z))=V(H)$). Since for any vertex $u \in V(H)$ it holds true that $(u,u) \not \in V(Z)$, therefore $\varepsilon_{H}(Z)>0$. It follows that $\svSpan{H}>0$ and $\seSpan{H}>0$.
\end{proof}

The part from the proof of Theorem \ref{thm:ssZero}, where $|V(H)|>2$, can be stated in the setting of players on the graph, as follows. Remember, the traditional movement rules apply. We will show that Alice and Bob can visit all vertices and edges without ever being on the same vertex by providing their movements through $H$, and therefore neither $\seSpan{H}=0$ nor $\svSpan{H}=0$. First, Alice and Bob start on different vertices of the graph, say the vertex $u$ is where Alice starts, and $v$ is Bob's beginning location. Alice can visit all vertices of $H$ in a breadth-first search (BFS) order with respect to the starting vertex $u$. For each vertex $w$ in the BFS order, Alice first moves from $u$ to $w$. Then for each neighbour $x$ of the vertex $w$, she moves to $x$ and back to $w$. Finally, she returns to $u$. In this manner she visits all edges of the graph and therefore also all vertices. If at any point in time Alice has to move to Bob's vertex, they simply swap vertices by moving along the same edge at the same time, otherwise Bob does not move while Alice is moving. After Alice has finished moving, Alice and Bob swap roles and Bob executes the same procedure, while Alice remains at the same vertex, unless she swaps positions with Bob along the same edge. Clearly, at any point in time, Alice and Bob are at distance at least 1 and therefore $\svSpan{H}>0$ and $\seSpan{H}>0$.

\begin{theorem}\label{equal:s}
If $H$ is the one-vertex graph or $\rad(H)=1$, then 
\begin{displaymath}
\seSpan{H}=\svSpan{H}.
\end{displaymath}
\end{theorem}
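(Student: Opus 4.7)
The plan is to reduce this to the results already established, specifically Observation \ref{obs:1} and Theorem \ref{thm:ssZero}, so that almost no new work is required.

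First I would dispose of the trivial case: if $H$ is the one-vertex graph, then Theorem \ref{thm:ssZero} immediately gives $\seSpan{H}=\svSpan{H}=0$, so equality holds.

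Next, suppose $\rad(H)=1$. Then in particular $|V(H)|\geq 2$, so the implication $(3)\Rightarrow(1),(2)$ of Theorem \ref{thm:ssZero} fails for $H$, and therefore (by the contrapositive of the equivalences) both $\seSpan{H}>0$ and $\svSpan{H}>0$. On the other hand, Observation \ref{obs:1} gives
\begin{displaymath}
\seSpan{H}\leq \svSpan{H}\leq \rad(H)=1.
\end{displaymath}
Since the spans are integers (they lie in the set of values $m_P(f,g)\in\mathbb{Z}_{\geq 0}$), the two inequalities $0<\seSpan{H}\leq \svSpan{H}\leq 1$ force $\seSpan{H}=\svSpan{H}=1$.

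There is essentially no obstacle here; the statement is a direct corollary of the bound $\svSpan{H}\leq \rad(H)$ from Observation \ref{obs:1} combined with the characterisation of $0$-span graphs from Theorem \ref{thm:ssZero}, which together squeeze both span values into the singleton $\{1\}$ whenever $\rad(H)=1$.
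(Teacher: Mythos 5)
Your proposal is correct and follows essentially the same route as the paper: both dispose of the one-vertex case via Theorem \ref{thm:ssZero}, then for $\rad(H)=1$ combine the nonzero-span conclusion from Theorem \ref{thm:ssZero} with the upper bound $\seSpan{H}\leq\svSpan{H}\leq\rad(H)$ from Observation \ref{obs:1} to force both values to equal $1$. Your version is slightly more explicit about why Theorem \ref{thm:ssZero} yields positivity (via the failure of condition $|V(H)|=1$) and about integrality, but the argument is the same.
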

\begin{proof}
The case when $H$ is the one-vertex graph follows directly from Theorem \ref{thm:ssZero}. Now let $H$ be such that $\rad(H) = 1$. Also from Theorem \ref{thm:ssZero} it follows that $\svSpan{H}\neq 0$ and $\seSpan{H}\not = 0$. Since $\rad(H)=1$, using Observation \ref{obs:1} we immediately obtain that $\svSpan{H}=\seSpan{H}=1$. 
\end{proof}

Note that for a path $P_n$, for any integer $n$, it holds that $\svSpan{P_n}=\seSpan{P_n}$. Moreover, for any $n>1$, $\svSpan{P_n}=\seSpan{P_n} = 1$. Also, for any $n>3$, $\rad(P_n)>1$.  Therefore there are graphs $H$ such that $\rad(H)>1$ and $\svSpan{H} = \seSpan{H}$. Hence, we present the following open problem.

\begin{problem}
Find all connected graphs $H$ for which $\seSpan{H} = \svSpan{H}$.
\end{problem}

Note, for any tree $T$ visiting all vertices requires visiting all edges, therefore $\seSpan{T}=\svSpan{T}$.

\begin{theorem}\label{thm:dsZero}
Let $H$ be any connected graph. The following statements are equivalent.
\begin{enumerate}
    \item $\deSpan{H}=0$.
    \item $\dvSpan{H}=0$.
    \item $|V(H)|=1$. 
\end{enumerate}
\end{theorem}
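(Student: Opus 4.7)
The plan is to prove the cycle $(3)\Rightarrow(2)\Rightarrow(1)\Rightarrow(3)$. The first two implications are immediate: if $|V(H)|=1$ then every weak homomorphism into $H$ is constant, forcing $m_G(f,g)=0$; and $(2)\Rightarrow(1)$ is Observation \ref{obs:2}, which gives $\deSpan{H}\le \dvSpan{H}$. The whole argument therefore reduces to proving $(1)\Rightarrow(3)$, which I would do contrapositively: assume $|V(H)|\ge 2$ and apply Theorem \ref{thm:ev-spanOnDirectProduct} to reduce the task to exhibiting a connected subgraph $Z\subseteq_C H\times H$ with $p_1(Z)=p_2(Z)=H$ and $\varepsilon_{H}(Z)\ge 1$. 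This yields $\deSpan{H}\ge 1$, and then $\dvSpan{H}\ge 1$ follows automatically from Observation \ref{obs:2}.

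The construction is a single ``shift by one'' closed walk, tailored to the active movement rule that forces both players to move at every step. I would start from any closed walk $W=(w_0,w_1,\ldots,w_{N-1},w_0)$ in $H$ that traverses every edge of $H$ at least once; such a walk exists because the multigraph obtained from $H$ by doubling each edge has all even degrees, so it admits an Eulerian circuit whose projection to $H$ is the desired walk. Now let Alice follow $W$ while Bob follows $W$ shifted by one step, and encode this pair of trajectories as the subgraph $Z$ of $H\times H$ with vertex set $\{(w_t,w_{(t+1)\bmod N})\mid 0\le t<N\}$ and edges $(w_t,w_{t+1})(w_{t+1},w_{t+2})$ for every $t$ (indices mod $N$). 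Each such pair is indeed an edge of $H\times H$ because both $w_tw_{t+1}$ and $w_{t+1}w_{t+2}$ lie in $E(H)$, and $Z$ is connected as the image of a closed walk. Since $W$ covers $E(H)$ and hence $V(H)$, both vertex projections and both edge projections of $Z$ are surjective onto $H$; and $d_H(w_t,w_{t+1})=1$ at every step, so $\varepsilon_{H}(Z)=1$.

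The only delicate point is justifying that Bob's shifted trajectory is itself a legal walk under the direct-product rules; the shift-by-one is precisely what keeps Bob one step ahead of Alice along the same walk, so that each of his position changes is a genuine edge and he remains at distance exactly $1$ from Alice throughout. I expect no further obstacle: the argument follows the template of Theorems \ref{thm:s-vertexSpanOnStrongProduct} and \ref{thm:s-edgeSpanOnStrongProduct}, but the subgraph $Z$ is built here from a single cyclic walk rather than the more elaborate component-by-component construction used for the strong product in the proof of Theorem \ref{thm:ssZero}.
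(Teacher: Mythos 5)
Your proof is correct, but it takes a genuinely different route from the paper's. The paper handles $|V(H)|=2$ with an explicit two-vertex subgraph of $H\times H$ and then, for $|V(H)|>2$, argues informally in the language of player strategies: Alice performs a BFS-based traversal of all edges while Bob oscillates between the two endpoints of a fixed edge, with threatened collisions resolved by having both players retrace their steps to the start and swap positions. Your construction replaces all of this with a single uniform certificate: take an Eulerian circuit of the edge-doubled multigraph of $H$, i.e.\ a closed walk $W=(w_0,\dots,w_{N-1},w_0)$ covering every edge, and let $Z\subseteq H\times H$ consist of the pairs $(w_t,w_{(t+1)\bmod N})$ joined along consecutive steps. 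All the required properties are immediate: consecutive walk vertices are distinct and adjacent, so each claimed edge really lies in $E(H\times H)$ and $\varepsilon_H(Z)=1$; $Z$ is connected as the image of a closed walk; and since $W$ covers every edge of $H$ in both a forward and a backward position, both projections are edge surjective, so Theorem \ref{thm:ev-spanOnDirectProduct} gives $\deSpan{H}\geq 1$. What your approach buys is the elimination of the case split on $|V(H)|$ and of the somewhat hand-waved collision-avoidance step (the "shift by one" makes the two trajectories stay at distance exactly $1$ by construction, so no collision can ever arise), at the cost of invoking the Eulerian-circuit fact; it also yields the strong-span case of Theorem \ref{thm:ssZero} for free, since the same $Z$ sits inside $H\boxtimes H$. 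The implication structure $(3)\Rightarrow(2)\Rightarrow(1)\Rightarrow(3)$ via Observation \ref{obs:2} is sound.
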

\begin{proof}
The proof is similar to the proof of Theorem \ref{thm:ssZero} with the only difference being in the case $|V(H)|>2$. For this case we provide the proof in the language of players' movements through the graph and show that they can visit all vertices and edges without ever being on the same vertex, thus neither $\deSpan{H}=0$ nor $\dvSpan{H}=0$.

Alice and Bob can move to visit every vertex and edge of the graph in a similar manner as described after the proof of Theorem \ref{thm:ssZero} with a small change. This time, Alice and Bob start on different end-vertices of the same edge, say $e=uv$, moreover assume $u$ is where Alice starts, and $v$ is where Bob starts. During every move Alice makes, her movements are the same as described in the mentioned case above, Bob must also move, since active movement rules apply. Instead of remaining at the same vertex at each move Alice makes, Bob alternates between the vertices $v$ and $u$. If at any point in time Alice and Bob both want to move to the same vertex ($u$ or $v$), they can avoid such moves by retracing all their steps to the starting positions and then swapping vertices. After Alice has visited all edges and vertices, they retrace their steps to the starting position and their roles are again reversed. Bob can now visit all vertices and edges in a similar fashion without meeting Alice at the same vertex. This implies that in the case where $|V(H)>2|$ neither $\deSpan{H}=0$ nor $\dvSpan{H}=0$.
\end{proof}

\begin{theorem}\label{equal:d}
If $H$ is the one-vertex graph or $\rad(H)=1$, then 
\begin{displaymath}
\deSpan{H}=\dvSpan{H}. 
\end{displaymath}
\end{theorem}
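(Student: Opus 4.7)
The plan is to mirror the proof of Theorem \ref{equal:s} almost verbatim, replacing the references to the strong-span results by their direct-span analogues that have already been established in the paper.

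First I would dispose of the trivial case: if $H$ is the one-vertex graph, then Theorem \ref{thm:dsZero} immediately gives $\deSpan{H}=\dvSpan{H}=0$, so in particular they are equal.

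Next, assume $\rad(H)=1$. Since $\rad(H)=1$ forces $|V(H)|\geq 2$, Theorem \ref{thm:dsZero} (applied in the other direction) implies $\dvSpan{H}\neq 0$ and $\deSpan{H}\neq 0$. On the other hand, Observation \ref{obs:2} gives the chain
\begin{displaymath}
\deSpan{H}\leq \dvSpan{H}\leq \rad(H)=1.
\end{displaymath}
Since $\deSpan{H}$ and $\dvSpan{H}$ are non-negative integers that are both strictly positive and at most $1$, each of them must equal $1$, and hence $\deSpan{H}=\dvSpan{H}$.

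There is no genuine obstacle here: the theorem is an immediate corollary of the already-proved characterisation of $0$-span graphs in the direct setting (Theorem \ref{thm:dsZero}) combined with the general upper bound $\rad(H)$ on the span (Observation \ref{obs:2}). The only thing one needs to be careful about is that Theorem \ref{thm:dsZero} really does apply to every connected graph $H$ with at least two vertices; in particular it covers the case $\rad(H)=1$, which is needed to rule out the value $0$ before using the upper bound to pin both spans to $1$.
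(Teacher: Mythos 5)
Your proposal is correct and follows essentially the same route as the paper's own proof: the one-vertex case via Theorem \ref{thm:dsZero}, and for $\rad(H)=1$ the combination of Theorem \ref{thm:dsZero} (to exclude the value $0$) with the upper bound $\rad(H)$ from Observation \ref{obs:2} to pin both spans to $1$. Your added remark that $\rad(H)=1$ forces $|V(H)|\geq 2$ makes the application of the characterisation slightly more explicit than the paper does, but the argument is the same.
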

\begin{proof}
The case when $H$ is the one-vertex graph follows directly from Theorem \ref{thm:dsZero}. Now let $H$ be such that $\rad(H) = 1$. Again, from Theorem \ref{thm:dsZero} it follows that $\dvSpan{H}\neq 0$ and $\deSpan{H}\not = 0$. Since $\rad(H)=1$, using Observation \ref{obs:2} we immediately obtain that $\dvSpan{H}=\deSpan{H}=1$. 
\end{proof}

\begin{problem}
Find all connected graphs $H$ for which $\deSpan{H} = \dvSpan{H}$.
\end{problem}

Again, since for any tree $T$ visiting all vertices requires visiting all edges, the equality above holds true for trees.

Next we give and prove the following characterisation of graphs with the Cartesian vertex span and Cartesian edge span equal to 0.

\begin{theorem}\label{thm:cSpanZero}
Let $H$ be any connected graph. The following statements are equivalent.
\begin{enumerate}
    \item $\ceSpan{H}=0$.
    \item $\cvSpan{H}=0$.
    \item There is a positive integer $n$ such that $H$ is an $n$-path.
\end{enumerate}
\end{theorem}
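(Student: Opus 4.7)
The plan is to establish the cyclic chain $(3) \Rightarrow (2) \Rightarrow (1) \Rightarrow (3)$.

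For $(3) \Rightarrow (2)$ I would use a discrete intermediate-value argument. By Observation \ref{obs:pathtoconnCart} it suffices to show that $m_G(f,g) = 0$ for every pair of surjective opposite weak homomorphisms $f, g : G \to H$ with $G$ connected. Label the vertices of $H = P_n$ by $0, 1, \ldots, n-1$ and define $\delta(u) = f(u) - g(u)$. The opposite condition forces that along each edge $uv \in E(G)$ exactly one of $f,g$ moves by $\pm 1$ while the other is constant, so $|\delta(u) - \delta(v)| = 1$. Surjectivity of $f$ furnishes $u_0, u_1 \in V(G)$ with $f(u_0) = 0$ and $f(u_1) = n-1$, which gives $\delta(u_0) \le 0 \le \delta(u_1)$. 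Connectivity of $G$ provides a walk from $u_0$ to $u_1$ along which $\delta$ is an integer sequence with unit jumps, and such a sequence must take the value $0$; at the corresponding vertex $u$ one has $f(u) = g(u)$, so $m_G(f,g) = 0$. The implication $(2) \Rightarrow (1)$ is immediate from Observation \ref{obs:3}.

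I would prove $(1) \Rightarrow (3)$ by contrapositive: assuming $H$ is not a path, I construct via Theorem \ref{thm:s-spanOnCartesianProduct} a connected subgraph $Z \subseteq_C H \BoxProduct H$ with $p_1(Z) = p_2(Z) = H$ and with $V(Z)$ disjoint from the diagonal $\{(u,u) : u \in V(H)\}$, so that $\ceSpan{H} \ge \varepsilon_{H}(Z) \ge 1$. In the language of Alice and Bob this is a schedule of alternating single-player moves that visits every edge of $H$ twice (once per player) without ever placing both players on the same vertex. Since $H$ is connected and is not a path, either (a) $H$ contains a cycle, or (b) $H$ is a tree with a vertex of degree at least three.

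In case (a), I would base the schedule on a ``chasing'' motion around a cycle $C$ of $H$, exactly as in the $C_n$ example, which traverses every edge of $C$ twice while keeping the two players at distance one. The remaining edges of $H$ hang off $C$ in attached subgraphs, and each such subgraph can be explored by one player while the other is parked on $C$ at a vertex different from the current attachment point, so the non-collision invariant survives each excursion. In case (b), a vertex $v$ of degree at least three serves as a ``switching hub'': the players occupy distinct components of $H - v$ and, whenever they need to exchange components, one player transits through $v$ into a third, momentarily unoccupied, component while the other stays put, so the two players never collide at $v$. Both schedules translate directly into the desired subgraph $Z$ by recording at each instant the pair of current positions.

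The main obstacle is case (a) in full generality. The chase on $C$ alone is easy, but interleaving it with recursive excursions into the attached subgraphs -- ensuring that every edge of $H$ (including the edges incident to the attachment vertices) is traversed by both players and that the non-collision invariant is preserved at every entry to and exit from an excursion -- is the delicate part and is where the bulk of the bookkeeping lives.
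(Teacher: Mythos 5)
Your proposal is correct, and the interesting difference lies in the direction $(3)\Rightarrow(2)$. The paper works inside the product: it takes a connected $Z\subseteq_C P_n\BoxProduct P_n$ with surjective projections (via Theorem \ref{thm:s-spanOnCartesianProduct}), observes that the diagonal $\{(v_i,v_i)\}$ is a cut set separating $V_<$ from $V_>$, and derives a contradiction because surjectivity of $p_1$ forces $Z$ to meet both sides. Your argument stays at the level of the homomorphisms themselves: the opposite condition makes $\delta=f-g$ change by exactly $\pm 1$ along every edge of $G$, surjectivity of $f$ pins down vertices where $\delta\le 0$ and $\delta\ge 0$, and a discrete intermediate-value argument along a connecting walk forces $\delta=0$ somewhere. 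The two arguments encode the same obstruction (the diagonal of $P_n\BoxProduct P_n$ disconnects the off-diagonal part), but yours is more elementary and self-contained, needing only Observation \ref{obs:pathtoconnCart} rather than the product-graph machinery; the paper's version has the advantage of making the geometric picture in $P_n\BoxProduct P_n$ explicit. For $(1)\Rightarrow(3)$ your case split (cycle, or tree with a vertex of degree at least three) and the dodging/switching-hub strategies coincide with the paper's; the ``bookkeeping'' you flag as delicate in case (a) is handled in the paper at exactly the same informal level (Bob parks on the cycle and steps to a cycle-neighbour distinct from Alice's position whenever she needs his vertex, which always exists since the cycle has length at least three), so you should not regard that as a gap relative to the published argument.
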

\begin{proof}
Let $H=P_n$ be a path on $n$ vertices, for an arbitrary positive integer $n$. Denote the vertices of $P_n$ by $v_0, v_1, \ldots, v_{n-1}$, such that for any $i\in\{0,\ldots,n-2\}$ the vertices $v_i$ and $v_{i+1}$ are adjacent. We show that $\cvSpan{H} = 0$. Using Theorem \ref{thm:s-spanOnCartesianProduct}, let $Z \subseteq_C H \BoxProduct H$ be such that $p_1(V(Z))=p_2(V(Z))=V(H)$ and $\varepsilon_H(Z) = \cvSpan{H}$. Note, the set $\{ (v_i, v_i) \ |\ i \in \{0,1,\ldots, n-1\}\}$ is a cut set of $P_n \BoxProduct P_n$ which divides the graph $P_n \BoxProduct P_n$ into two connected components, one with the vertex set $V_< = \{ (v_i, v_j) \ |\ i<j \text{ and } i,j \in \{0,1,\ldots, n-1\}\}$ and the other with the vertex set $V_> = \{ (v_i, v_j) \ |\ i>j \text{ and } i,j \in \{0,1,\ldots, n-1\}\}$. Towards contradiction suppose $\cvSpan{H} > 0$. This implies that for any $i \in \{0,1,\ldots, n-1\}$ the vertex $(v_i, v_i)$ does not belong to $V(Z)$. Since $p_1(V(Z))=V(H)$ there exists a vertex $(v_0,v_j)\in V(Z)$ with $j > 0$. Moreover, $(v_0, v_j)$ belongs to the set $V_<$. Similarly, since $p_1(V(Z))=V(H)$ there also exists a vertex $(v_{n-1}, v_l) \in V(Z)$ with $l<n-1$ and this vertex belongs to $V_>$. But then $(v_0, v_j)$ and $(v_{n-1}, v_l)$ belong to two distinct connected components, a contradiction to the fact that $Z$ is connected. Therefore, $\cvSpan{H} = 0$ and using Observation \ref{obs:3} also $\ceSpan{H} = 0$.

Next, we show that $\cvSpan{H} = 0$ or $\ceSpan{H} = 0$ implies that there exists a positive integer $n$ such that $H$ is an $n$-path. Suppose that $H$ is not an $n$-path for any positive integer $n$. This means that $H$ contains a cycle or it is a tree that contains a vertex of degree at least 3. For these two cases we show that Alice and Bob can visit all vertices and edges without ever being in the same vertex at the same time and therefore $\cvSpan{H} > 0$ and $\ceSpan{H} > 0$.

Suppose $H$ contains a cycle $C$. Let Alice and Bob start at two different vertices of the cycle $C$, say $u$ and $v$, respectively. Since lazy movement rules apply, Alice's movements can be similar to the ones described after the proof of Theorem \ref{thm:ssZero}, while Bob remains still at all times, unless Alice needs to move to Bob's current position. Since they cannot both move at the same time, Bob avoids Alice as follows. Alice does not move, Bob moves to another adjacent vertex of the the cycle $C$, stays there, and from there Alice can move to her desired vertex. After Alice has visited all vertices and edges, she can retrace her steps to move to the starting position, and so can Bob. After that Bob executes his moves using the same procedure as Alice before, and Alice can avoid using Bob's previous strategy. Since they were never at the same vertex at the same time, $\cvSpan{H} > 0$ and $\ceSpan{H} > 0$.

Finally, let $H$ be a tree that contains a vertex of degree at least three, say $u_0$, and let $u_1, u_2, u_3$ be three distinct neighbours of $u_0$. Note, since $H$ is a tree, the vertices $u_1, u_2$ and $u_3$ induce a graph with no edges. Let Alice start in the vertex $u_1$, and Bob in the vertex $u_0$. While Alice visits all vertices (and therefore edges, since $H$ is a tree) of $H-u_0$ and returns to $u_1$, Bob stays in $u_0$. Now, Alice and Bob can swap positions and still obey the lazy movement rules as follows. Bob moves to $u_2$ while Alice stays at $u_1$, then Bob stays at $u_2$ and Alice moves to $u_0$ and after that to $u_3$. Now Alice stays at $u_3$, now Bob can move to $u_0$ and after that to $u_1$. After this Alice can move to $u_0$, visit the remaining vertices and edges (while Bob does not move) and then return to $u_0$. Now the roles of Alice and Bob are exchanged, where they can apply the same procedure and Bob can visit all vertices and edges of $H$.  Again, Alice and Bob were never at the same vertex at the same time, therefore $\cvSpan{H} > 0$ and $\ceSpan{H} > 0$. This concludes the proof.
\end{proof}

\begin{theorem}\label{equal:c}
Let $H$ be a connected graph. If $H$ is a path or $\rad(H)=1$, then 
\begin{displaymath}
\ceSpan{H}=\cvSpan{H}. 
\end{displaymath}
\end{theorem}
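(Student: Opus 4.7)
The plan is to mirror the structure of the proofs of the analogous Theorems \ref{equal:s} and \ref{equal:d}, splitting the argument according to the two sufficient conditions in the hypothesis. I would first dispose of the case in which $H$ is a path: by Theorem \ref{thm:cSpanZero}, every path $H$ satisfies $\ceSpan{H} = \cvSpan{H} = 0$, so the claimed equality is immediate.

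For the remaining case I would assume $\rad(H) = 1$ and, without loss of generality, that $H$ is not a path, since the path subcase has already been handled. Theorem \ref{thm:cSpanZero} applied in its contrapositive form then gives $\cvSpan{H} \neq 0$ and $\ceSpan{H} \neq 0$. Since both quantities are non-negative integers, Observation \ref{obs:3} combined with the radius hypothesis yields the chain
\[
1 \leq \ceSpan{H} \leq \cvSpan{H} \leq \rad(H) = 1,
\]
which forces $\ceSpan{H} = \cvSpan{H} = 1$, as required.

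I do not anticipate any genuine obstacle: both cases reduce to applying Theorem \ref{thm:cSpanZero} to pin down the spans (either to $0$ or to being positive) together with, in the radius-one case, Observation \ref{obs:3} to bound them above by $1$. The only small point worth noting is that the two hypotheses may overlap (for instance $P_2$ and $P_3$ are paths with $\rad = 1$), but this causes no inconsistency since the two treatments yield the same answer on the overlap.
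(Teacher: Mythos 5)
Your proposal is correct and follows essentially the same route as the paper's own proof: dispose of the path case via Theorem \ref{thm:cSpanZero}, then in the radius-one case combine the nonvanishing of both spans (again from Theorem \ref{thm:cSpanZero}) with the upper bound $\rad(H)=1$ from Observation \ref{obs:3} to force both spans to equal $1$. If anything, your write-up is slightly cleaner, since the paper's final line contains a typo (it concludes with the direct spans $\dvSpan{H}=\deSpan{H}=1$ rather than $\cvSpan{H}=\ceSpan{H}=1$).
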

\begin{proof}
The case where $H$ is a path follows directly from Theorem \ref{thm:cSpanZero}. Assume that $H$ is not a path and $\rad(H)=1$. From Theorem \ref{thm:cSpanZero} it follows that $\cvSpan{H}\neq 0$ and $\ceSpan{H}\not = 0$. Since $\rad(H)=1$, using Observation \ref{obs:3} we immediately obtain that $\dvSpan{H}=\deSpan{H}=1$. 
\end{proof}

\begin{problem}
Find all connected graphs $H$ for which $\ceSpan{H} = \cvSpan{H}$.
\end{problem}

As well as in the strong and direct span variants, this equality holds true for any tree.

\section{Algorithm for computing the maximal safety distance}

In this section we show that regardless of the type of movement rules, the maximal safety distance two players can keep at all times can be determined in polynomial time. Note, the algorithms are written in general and we assume that the goal - whether the players must visit all vertices or all edges of the given graph - is known. Therefore in appropriate places only one of the two possible conditions needs to be checked.

\begin{algorithm}[!ht]
\SetKwData{True}{true}
\SetKwData{False}{false}
\KwIn{graph $H$, required distance $D$, movement rules $R$}
\KwOut{\True if two players can keep the maximal distance at least $D$ while traversing all vertices/edges of $H$ under movement rules $R$, and \False, otherwise}
\DontPrintSemicolon
\BlankLine
\tcc{create the corresponding product}
\eIf{$R$ is traditional movement rules}{ \label{alg51-l1}
    $G = H \boxtimes H$\;
}{
    \eIf{$R$ is active movement rules}{
        $G = H \times H$\;
    }{
        $G = H \BoxProduct H$\;
    }
}
\BlankLine
\tcc{create the corresponding induced subgraph}
$I=\emptyset$\;
\ForEach{$(u,v) \in V(G)$}{
    \If{$d_H(u,v) \geq D$}{
        $I = I \cup \{(u,v)\}$\;
    }
}
$G_I = \langle I \rangle_G$\; \label{alg51-l12}
\BlankLine
\tcc{check if any component projects to $V(H)$ or to $H$}
\ForEach{component $C$ of $G_I$}{\label{alg51-l13}
    \If{$C$ projects to $V(H)$ or to $H$}{\label{alg51-l14}
        \Return{\True}
    }
}
\Return{\False}\label{alg51-l16}
\caption{existsSafeWalkAtGivenDistance($H$, $D$, $R$)\label{algCheckWalkD}}
\end{algorithm}

\begin{theorem}\label{thm:algD}
Algorithm \ref{algCheckWalkD} correctly identifies whether two players can traverse all vertices/edges of a given graph $H$ whilst maintaining a safety distance at least $D$ in polynomial time.
\end{theorem}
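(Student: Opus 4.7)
The plan is to reduce correctness to the characterisations in Theorems~\ref{thm:s-vertexSpanOnStrongProduct}, \ref{thm:s-edgeSpanOnStrongProduct}, \ref{thm:ev-spanOnDirectProduct}, and \ref{thm:s-spanOnCartesianProduct}, which state that for each movement rule set the corresponding span equals the maximum of $\varepsilon_H(Z)$ taken over connected subgraphs $Z$ of the appropriate product with the required projection property. Since the product $G$ built in lines~\ref{alg51-l1}--\ref{alg51-l12} is precisely that product, the algorithm is correct exactly when the existence of some connected $Z \subseteq_C G$ with $\varepsilon_H(Z) \geq D$ and the required projection condition is equivalent to the existence of a connected component $C$ of $G_I$ with the corresponding projection condition.

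For the forward direction, suppose such a $Z$ exists. Every vertex $(u,v) \in V(Z)$ satisfies $d_H(u,v) \geq \varepsilon_H(Z) \geq D$, hence $V(Z) \subseteq I$. Because $Z$ is connected, it lies inside a single connected component $C$ of $G_I$, and the projection surjectivity of $Z$ transfers to $C$ since $V(Z) \subseteq V(C)$ and $E(Z) \subseteq E(C)$; so the algorithm outputs True. For the converse, if a component $C$ of $G_I$ satisfies the projection condition, then $C$ is a connected subgraph of $G$, and every vertex of $C$ lies in $I$ by construction, so $\varepsilon_H(C) \geq D$. Thus $C$ itself witnesses that the span is at least $D$, vindicating the True output.

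For the running-time analysis, let $n = |V(H)|$ and $m = |E(H)|$. All pairwise distances $d_H(u,v)$ can be precomputed with $n$ BFS traversals in $O(n(n+m))$ time. The product graph $G$ has $n^2$ vertices and at most $O(nm + m^2)$ edges, so both constructing $G$ and extracting the induced subgraph $G_I$ are polynomial. Computing the connected components of $G_I$ in line~\ref{alg51-l13} is linear in $|V(G_I)| + |E(G_I)|$, and for each component $C$ the check in line~\ref{alg51-l14} is a single pass over $V(C)$ or $E(C)$ recording which vertices (resp.\ edges) of $H$ are hit by $p_1$ and by $p_2$. All phases are polynomial in $n + m$. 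The main point requiring care is the edge-variant surjection test: one must verify that for every $e \in E(H)$ some edge of $C$ projects to $e$ under both $p_1$ and $p_2$, which is routine bookkeeping once the explicit edge sets of the three products are exploited.
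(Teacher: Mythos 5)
Your proposal is correct and follows essentially the same route as the paper's proof: reduce to the product characterisations, observe that any witness $Z$ must lie in a single component of $G_I$ whose projections inherit (edge) surjectivity from $Z$, and conversely that any suitable component is itself a witness, with a routine polynomial running-time count. Your write-up is slightly more explicit about why $E(Z)\subseteq E(C)$ (via $G_I$ being induced) and gives a marginally sharper complexity bound, but there is no substantive difference in approach.
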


\begin{proof}
Depending on the movement rules $R$, we use the results from Theorems \ref{thm:s-vertexSpanOnStrongProduct}, \ref{thm:s-edgeSpanOnStrongProduct}, \ref{thm:ev-spanOnDirectProduct} and \ref{thm:s-spanOnCartesianProduct} which imply that it suffices to check only subgraphs of the corresponding products. Denote the corresponding product by $\star$. Moreover, if there exists a connected subgraph $Z \subseteq_C H \star H$ with the (edge) surjective projections such that $\varepsilon_{H}(Z)\geq D$, then for every vertex $(u,v) \in V(Z)$ it holds true that $d_H(u,v) \geq D$ (the distance condition). Lines \ref{alg51-l1}-\ref{alg51-l12} of Algorithm \ref{algCheckWalkD} compute the maximal induced subgraph $G_I$ of $H \star H$ in which for every vertex the distance condition is true. Clearly, $Z$ is a subgraph of some component of $G_I$. In lines \ref{alg51-l13}-\ref{alg51-l16} we check whether a component $C$ projects to $V(H)$ (in case players need to traverse all vertices) or to $H$, when the players must visit all edges of $H$. If such a component exists, then the whole component can be taken as the graph $Z$. If no such component exists, the players cannot maintain the desired distance at least $D$.

Denote by $n$ the number of vertices of the graph $H$, also assume the distances between any two vertices of $H$ have been previously determined. The desired product $H\star H$ and the subgraph $G_I$ can be determined in $O(n^4)$ time. Also, the surjectivity of the first (second) projection of a component $C$ to $V(H)$ can be checked by a simple loop checking whether every vertex of $H$ appears as the first (second) coordinate of some vertex of $G_I$. In the worst case scenario, all components of $G_I$ must be checked, amounting to checking at most $O(n^2)$ vertices. To check the edge surjectivity of the first (second) projection of a component $C$ to $H$, for every edge of $G_I$ we can label the edge of $H$ to which the first (second) projection maps to. If for both projections there are no unlabelled edges of $H$, then the projections are edge surjective. This can be done in $O(n^4)$ time. This concludes the proof.
\end{proof}

\begin{algorithm}[!ht]
\SetKwData{True}{true}
\SetKwData{False}{false}
\SetKw{KwDownTo}{down to}
\SetKwFunction{KwAlg}{existsSafeWalkAtGivenDistance}
\KwIn{graph $H$, movement rules $R$}
\KwOut{the maximal safety distance two players can maintain at all times while traversing all vertices/edges the graph $H$ under the movement rules $R$}
\DontPrintSemicolon
\BlankLine
\For{$i = \rad(H)$ \KwDownTo $1$}{
    \If{\KwAlg{$H, i, R$} == \True}{
        \Return $i$\;
    }
}
\Return 0\;
\caption{span($H$, $R$)\label{algSpan}}
\end{algorithm}

\begin{theorem}\label{thm:algComputeSpan}
Algorithm \ref{algSpan} returns the span of the given graph $H$ which corresponds to the given movement rules $R$ in polynomial time.
\end{theorem}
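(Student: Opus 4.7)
The plan is to separate the argument into correctness and complexity, and in both parts to reduce the claim to Theorem~\ref{thm:algD} together with the uniform upper bound $\sigma(H) \leq \rad(H)$ provided by Observations~\ref{obs:1}, \ref{obs:2} and \ref{obs:3} (where $\sigma(H)$ denotes whichever of the six span variants corresponds to the movement rules $R$ and the chosen goal). First I would note that by these observations every variant of the span takes values in $\{0, 1, \ldots, \rad(H)\}$, so it is enough for Algorithm~\ref{algSpan} to probe the finitely many integer candidates in this range in decreasing order.

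For correctness, let $d^\star$ denote the value of the corresponding span of $H$ under the movement rules $R$. By Theorem~\ref{thm:algD}, a call \texttt{existsSafeWalkAtGivenDistance}$(H, i, R)$ returns \texttt{true} if and only if there exists a connected subgraph $Z$ of the appropriate product $H \star H$ (with $\star \in \{\boxtimes, \times, \BoxProduct\}$ dictated by $R$), whose projections satisfy the vertex or edge surjectivity condition corresponding to the chosen goal, and with $\varepsilon_H(Z) \geq i$. By Theorems~\ref{thm:s-vertexSpanOnStrongProduct}, \ref{thm:s-edgeSpanOnStrongProduct}, \ref{thm:ev-spanOnDirectProduct} and \ref{thm:s-spanOnCartesianProduct}, the existence of such a $Z$ with $\varepsilon_H(Z) \geq i$ is equivalent to $d^\star \geq i$. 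Hence the set of integers $i \in \{1, \ldots, \rad(H)\}$ for which the call returns \texttt{true} is precisely $\{1, \ldots, d^\star\}$ when $d^\star \geq 1$, and is empty when $d^\star = 0$. Because Algorithm~\ref{algSpan} iterates $i$ from $\rad(H)$ down to $1$ and returns the first $i$ for which the call succeeds, it returns $d^\star$ whenever $d^\star \geq 1$; otherwise no call succeeds and the algorithm returns $0 = d^\star$. This establishes correctness.

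For the running time, I would observe that a single BFS from every vertex of $H$ produces all pairwise distances in $O(n^3)$ time, where $n = |V(H)|$; this is done once as preprocessing. The main loop of Algorithm~\ref{algSpan} executes at most $\rad(H) \leq n$ iterations. Each iteration invokes Algorithm~\ref{algCheckWalkD}, which, by Theorem~\ref{thm:algD}, runs in $O(n^4)$ time once the distances are known. Therefore the total running time of Algorithm~\ref{algSpan} is bounded by $O(n^3) + O(n) \cdot O(n^4) = O(n^5)$, which is polynomial in the size of $H$.

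The only delicate point I foresee is making sure that the equivalence ``call returns \texttt{true} $\iff$ $d^\star \geq i$'' is stated uniformly across all six variants, since the appropriate product $\star$, the kind of surjectivity required on the projections, and the characterisation theorem invoked all depend on $R$ and on whether the goal is to visit every vertex or every edge. Once these correspondences are spelled out, both correctness and the polynomial bound follow immediately from Theorem~\ref{thm:algD} and the observations on $\rad(H)$, so no additional combinatorial work is required.
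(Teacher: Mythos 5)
Your proposal is correct and follows essentially the same route as the paper: bound the span above by $\rad(H)$, observe that the candidate values form a down-set so the first successful call from the top yields the span, and multiply the at most $n$ iterations by the polynomial cost of Algorithm~\ref{algCheckWalkD}. The only cosmetic difference is that you cite Observations~\ref{obs:1}--\ref{obs:3} for the radius bound where the paper cites Lemma~\ref{mLessRad} directly, and you make the distance-precomputation cost explicit; the substance is identical.
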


\begin{proof}
The assertion follows immediately using Lemma \ref{mLessRad} which gives $\rad(H)$ as the upper bound for any of the defined spans, and Theorem \ref{thm:algD}. At each step of the loop in Algorithm \ref{algSpan} the Algorithm \ref{algCheckWalkD} is called for the appropriate value. Since we are looking for the maximal possible safety distance, we can start from the upper bound and check down to 0, thus we can stop the first time Algorithm \ref{algCheckWalkD} returns true. Since the radius of $H$ of order $n$ is bounded from above by $n$, the for loop of the algorithm is executed at most $n$ times, using the fact that Algorithm \ref{algCheckWalkD} is polynomial, it follows that the corresponding span can also be determined in polynomial time.
\end{proof}

\section{Open problems}
In addition to some open problems stated previously, we conclude the paper with the following open problems.

In Section \ref{tri} we characterise 0-span graphs for all possible variants. Also, we give infinite families of graphs for which the chosen span is 1. The following question is a natural generalisation of these results. Note, the same can be asked for any of the defined spans.

\begin{problem}\label{p1}
Let $n$ be a positive integer. Characterise all connected graphs $H$ (or find non-trivial families of graphs $H$) with the strong vertex span $\svSpan{H}=n$.  
\end{problem}

Towards the solution of Problem \ref{p1}, for each positive integer $n$, we now define a non-empty family $\mathcal{G}_n$ of graphs such that for each $G \in \mathcal{G}_n$ the spans of $G$ are at least $n-1$ or $n$, depending on the chosen span.

\begin{definition}\label{def:familyGn}
Let $n$ be a positive integer and let $G$ be a connected graph with $\diam(G)\geq n$. We say that $G$ is $n$-friendly if for all $u,v,w \in V(G)$, \[d(u,v)=n \text{ and } w\in N(u) \Longrightarrow \text{ there is } z\in N(v) \text{ such that } d(w,z) = n. \]  We use $\mathcal{G}_n$ to denote the family of all $n$-friendly graphs.
\end{definition}

A connected graph is called \emph{even} if, for any vertex $v \in V (G)$, there exists a unique vertex $v' \in V(G)$ such that $d(v,v') = \diam(G)$. An even graph is called \emph{harmonic-even}, if $uv\in E(G)$ whenever $u'v' \in E(G)$ for all $u,v \in V(G)$, see \cite{GoVe86, KlKo09} for results on even and harmonic-even graphs.

Note:
\begin{itemize}
    \item every harmonic-even graph $G$ is $\diam(G)$-friendly,
    \item if $n\geq 3$, then every cycle $C_k$, where $k\geq 2n$, is $n$-friendly, and
    \item if $n\geq 2$, then every hypercube $Q_k$, where $k \geq n$, is $n$-friendly. For more information about hypercubes see \cite{HIK2011knjiga}.
\end{itemize}

\begin{observation}
Let $n$ be a positive integer and $G$ be an $n$-friendly graph. Then the following hold true:
\begin{enumerate}
    \item $n \leq \seSpan{G} \leq \svSpan{G} \leq \rad(G)$,
    \item $n \leq \deSpan{G} \leq \dvSpan{G} \leq \rad(G)$, and
    \item $n-1 \leq \ceSpan{G} \leq \cvSpan{G} \leq \rad(G)$.
\end{enumerate}

Moreover, if $n = \rad(G)$, then 
\begin{enumerate}
    \item $\seSpan{G} = \svSpan{G} = \rad(G)$,
    \item $\deSpan{G} = \dvSpan{G} = \rad(G)$.
\end{enumerate}

To see that the assertions are true, it is sufficient to show that $n \leq \seSpan{G}$, $n \leq \deSpan{G}$ and $n-1 \leq \ceSpan{G}$. To show $n \leq \seSpan{G}$, $n \leq \deSpan{G}$, Alice and Bob can use the following movement strategies. Let Alice and Bob start at two vertices at distance $n$. Each time Alice moves from a vertex $u$ to an adjacent vertex $w$, Bob can move from $v$ to a corresponding vertex $z$, thus maintaining the distance before the move. This is possible since $G$ is $n$-friendly. When Alice visits all edges of $G$, they swap roles. Note that this solves the problem for traditional and active movement rules.

To show that $n-1 \leq \ceSpan{G}$, let Alice and Bob start at two vertices at distance $n$. Each time Alice moves from a vertex $u$ to an adjacent vertex $w$, Bob cannot move, therefore the distance between them is at least $n-1$. After every Alice's move, if the distance to Bob is $n-1$, then Alice does not move and Bob moves from $v$ to a corresponding vertex $z$ which is again at distance $n$ from Alice. If the distance between the players if at least $n$ after Alice's move, then Bob does not move, and Alice moves. When Alice visits all edges of $G$, they swap roles. This solves the case for lazy movement rules.
\end{observation}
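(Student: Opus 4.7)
The plan is to reduce the full statement to the three lower bounds $n \leq \seSpan{G}$, $n \leq \deSpan{G}$, and $n-1 \leq \ceSpan{G}$; everything else is essentially formal. The middle inequalities $\seSpan{G} \leq \svSpan{G}$, $\deSpan{G} \leq \dvSpan{G}$, $\ceSpan{G} \leq \cvSpan{G}$ hold because edge surjectivity is a strictly stronger condition than surjectivity, so the edge-variant maximum is taken over a subset of the set defining the vertex variant. The upper bounds by $\rad(G)$ are exactly Observations \ref{obs:1}, \ref{obs:2} and \ref{obs:3} (which rest on Lemma \ref{mLessRad}). The ``moreover'' part is then immediate: when $n = \rad(G)$ the outer inequalities in items (1) and (2) coincide and force equality throughout.

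To establish the three lower bounds I would, for each variant, build an explicit connected subgraph $Z$ of the appropriate product $G\boxtimes G$, $G\times G$ or $G\BoxProduct G$ with edge-surjective projections and with $\varepsilon_G(Z)$ equal to the target value, then invoke Theorem \ref{thm:s-edgeSpanOnStrongProduct}, Theorem \ref{thm:ev-spanOnDirectProduct} or Theorem \ref{thm:s-spanOnCartesianProduct} respectively. Concretely, fix $u_0, v_0 \in V(G)$ with $d_G(u_0, v_0) = n$, which exist because $\diam(G) \geq n$, and choose a closed walk $W = (w_0, w_1, \dots, w_k)$ at $u_0$ that traverses every edge of $G$ using only proper edges (no ``stays''); such a walk exists in any connected graph by taking a closed Eulerian walk in the multigraph obtained by doubling every edge. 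For the strong and direct cases I would then inductively pick $v_{i+1} \in N(v_i)$ with $d_G(w_{i+1}, v_{i+1}) = n$, applying the $n$-friendliness hypothesis with $u = w_i$, $v = v_i$, $w = w_{i+1}$. Since at every step both players move along a proper edge, the resulting maps from a path into $G$ are aligned weak homomorphisms, which handles the direct span and therefore the strong span a fortiori.

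For the Cartesian case I would interleave the construction: at odd-indexed time steps Alice moves from $w_i$ to a neighbour $w_{i+1}$ while Bob stays, so the distance drops to at least $n-1$; at the next step Alice stays and Bob uses the $n$-friendliness applied to $u = w_{i-1}$, $v = v_{i-1}$, $w = w_i$ (prepared at the preceding Alice-step) to jump to a vertex at distance $n$ from Alice's new position, restoring the gap. This produces opposite weak homomorphisms with $d_G(w_i, v_i) \geq n-1$ at every time, whence $\varepsilon_G(Z) \geq n-1$. In each of the three variants only Alice's projection has so far been made edge surjective, so the construction is completed by concatenating a symmetric second phase in which Bob performs an edge-traversing walk and Alice follows, launched from the distance-$n$ configuration in which the first phase terminates.

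The main obstacle is managing the junction between the two phases so that the concatenated map remains a single weak homomorphism of the correct type (aligned for direct, opposite for Cartesian) and the distance bound is not violated at the switch. Arranging for the first phase to finish in a state where $d_G(w_k, v_k) = n$ — straightforward, since both the synchronous and alternating schemes maintain this — makes the Cartesian junction a single Bob-move and the direct/strong junctions a trivial concatenation. Once this is checked, reading off $p_1, p_2$ from the combined walk gives the required $Z$, and the corresponding span-on-product theorem delivers the lower bound.
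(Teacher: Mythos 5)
Your proposal is correct and follows essentially the same route as the paper: reduce to the three lower bounds, let the follower use $n$-friendliness to restore distance $n$ after each of the leader's moves (synchronously for the strong/direct cases, alternately with a temporary drop to $n-1$ for the Cartesian case), and swap roles for edge surjectivity of the second projection. The only difference is that you formalize the paper's informal Alice-and-Bob strategy via an Eulerian walk on the doubled multigraph and the span-on-product theorems, which is a faithful elaboration rather than a different argument.
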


Since for any graph $H$ it holds true that $\seSpan{H}\leq \svSpan{H}$ (similar result applies to all span variants), it is natural to ask the following question (also for all span variants).

\begin{problem}
What is the maximum possible difference between $\svSpan{H}$ and $\seSpan{H}$?
\end{problem}

\acknowledgements
This work was supported by the Slovenian Research Agency under the grants P1-0297, J1-1693, and the research program P1-0285.

The authors are also grateful for the reviewers' remarks and suggestions for the improvement of this paper.

\nocite{*}
\bibliographystyle{abbrvnat}
\bibliography{zbirka}
\label{sec:biblio}

\end{document}